\def\NAT@def@citea{\def\@citea{\NAT@separator}}
\theoremstyle{plain}
\newtheorem{theorem}{Theorem}[section]
\newtheorem{lemma}[theorem]{Lemma}
\theoremstyle{definition}
\newtheorem{definition}[theorem]{Definition}
\newtheorem{example}[theorem]{Example}
\theoremstyle{remark}
\newtheorem{remark}[theorem]{Remark}
\newcommand{\eps}{\varepsilon}
\newcommand{\C}{\mathbb{C}}
\newcommand{\R}{\mathbb{R}}
\newcommand{\cS}{\mathcal{S}}
\newcommand{\oeps}{\eps^*}
\newcommand{\bA}{\bold{A}}
\newcommand{\bDelta}{\bold{\Delta}}
\newtheorem{assum}[theorem]{Assumption}
\newcommand{\bng}{\color{blue}}
\newcommand{\eng}{\color{black}}
\begin{document}

\articletype{Research article}

\title{Approximating the closest structured singular matrix polynomial}

\author{
\name{Miryam Gnazzo \textsuperscript{a}\thanks{CONTACT Miryam Gnazzo. Email: miryam.gnazzo@gssi.it} and Nicola Guglielmi\textsuperscript{a}}
\affil{\textsuperscript{a} Division of Mathematics, Gran Sasso Science Institute, L'Aquila, Italy.}
}

\maketitle

\begin{abstract}
Consider a matrix polynomial $P \left( \lambda \right)= A_0 + \lambda A_1 + \ldots + \lambda^d A_d$, with $A_0,\ldots, A_d$ complex (or real) matrices with a certain structure. In this paper we discuss an iterative method to numerically approximate the closest structured singular matrix polynomial 
$\widetilde P\left( \lambda \right)$, using the distance induced by the Frobenius norm. An important peculiarity of the approach we propose is the possibility to include different types of structural constraints. The method also allows us to limit the perturbations to just a few matrices and also to include additional structures, such as the preservation of the sparsity pattern of one or more matrices $A_i$, and also collective-like properties, like a palindromic structure. The iterative method is based on the numerical integration of the gradient system associated with a suitable functional which quantifies the distance to singularity of a matrix polynomial.
\end{abstract}

\begin{keywords}
Singular matrix polynomials; matrix nearness; structured matrix polynomials; gradient flow; matrix ODEs.
\end{keywords}

\section{Introduction}
Let $A_0,\ldots, A_d \in \mathbb{C}^{n\times n}$ form a matrix polynomial
\begin{equation*}
P \left( \lambda \right)=\sum_{i=0}^d \lambda^i A_i.
\end{equation*}
$P\left( \lambda \right)$ is called regular if $\det\left( P \left( \lambda \right) \right)$ does not vanish identically for all $\lambda \in \C$, otherwise, it is called singular. Consider for example a matrix pencil, $A_0 + \lambda A_1$. The notion of singularity is fundamental in the theory of linear differential-algebraic equations $A_1\dot x(t) + A_0 x(t) + f(t) = 0$ (which might arise as linearizations of nonlinear differential-algebraic equations $F(x,\dot x)=0$ near a stationary point). If the matrix pencil $A_0 + \lambda A_1$ is singular, then there exists no initial value $x(0)$ such that the corresponding initial value problem has a unique solution.

Note that in several applications of interest to dynamical systems on networks, $A_1$ is typically an adjacency matrix or related fixed matrix depending only on the network topology and hence is not subject to perturbations.

In the more general case of a matrix polynomial of degree $d$, an interesting problem is that of understanding if there exists a nearby singular matrix polynomial to $P$ of the same degree. 
The norm of the smallest perturbation of $P$ which makes it singular is called the \emph{distance to singularity} of $P$.
Similarly to the case of matrix pencils, estimating the value of this distance is important in various applications,
in particular, in the analysis of differential-algebraic equations,
\begin{equation}\label{nldae}
F\left(t,x(t),x'(t),\ldots, x^{\left(d \right)}(t) \right)=0,
\end{equation}
where $x^{\left( k \right)}(t)$ denotes the $k$-th time derivative of the state $x$, see e.g.
\cite{HaiLubRoc, BreCamPetz}. If DAEs are linearized
along a stationary solution, \cite{Campbell}, then one obtains a  linear system of DAEs
\begin{equation}\label{dae}
\sum\limits_{i=0}^{d}  A_i x^{(i)}(t) = 0,
\end{equation}
with constant coefficients $A_0,\ldots, A_d \in\mathbb C^{n\times n}$. If the associated matrix polynomial is singular, then the initial value problem of solving (\ref{dae}) with a consistent initial value $x^{\left( k \right)}(0)=x_0^k$, ($k=0,1,\ldots,d-1)$ is not solvable and/or the solution is not unique.
Thus, a singular matrix polynomial indicates that (\ref{nldae}) is not well-posed, and the distance to singularity of $P$ is a robustness measure for the DAE in the neighbourhood of a stationary solution. 

\subsection*{The considered problem}
Consider a regular matrix polynomial $P\left( \lambda \right)= A_0 + \lambda A_1 + \ldots + \lambda^d A_d$. We are interested to know how far it is from a singular matrix polynomial.
In this paper we consider structured perturbations 
\[
\mathbf{\Delta A}:=
\begin{bmatrix}
    \Delta A_0 \\
    \vdots\\
    \Delta A_d 
\end{bmatrix} \in \mathcal{S} \subseteq \mathbb{C}^{\left( d+1 \right)n \times n} \quad \mbox{to the coefficients} \;
\begin{bmatrix}
    A_0 \\
    \vdots\\
    A_d 
\end{bmatrix}, 
\] 
where $\Delta A_i \in \mathbb{C}^{n \times n}$, for $i=0,\ldots,d$ and the structure space $\cS$ is an arbitrary complex- or real-linear subspace of $\C^{\left( d+1\right) n \times n}$. For example, $\cS$ might be a space of real or complex matrices with a given sparsity pattern. We mainly consider the following matrix nearness problems (and a few variants):

{\bf Problem.} \ {\it Find $\mathbf{\Delta A} \in \cS$ of minimal Frobenius norm such that 
$(A_0+\Delta A_0) + \lambda(A_1+\Delta A_1) + \ldots + \lambda^d (A_d + \Delta A_d)$ is singular.}

By minimal Frobenius norm we intend that we aim to minimize 
\[
\| [ \Delta A_0, \Delta A_1, \ldots, \Delta A_d ] \|_F,
\]
which determines the \emph{distance to singularity} of $P\left( \lambda \right)$, with respect to the Frobenius norm. We will provide a more rigorous definition in Section \ref{sec: section ODE based approach}. 

In this general setting, we may recognize a few situations in which the singularity is determined by properties of the coefficients of the polynomial. For example, in the case of matrix pencils, a necessary condition for $A_0 + \lambda A_1$ to be singular is clearly that both $A_0$ and $A_1$ are singular matrices. A sufficient condition for $A_0 + \lambda A_1$ to be singular is that $A_0$ and $A_1$ have a common nonzero vector in their kernels. While this is a special case of particular interest, the Kronecker normal form of a matrix pencil (see Gantmacher~\cite{Gantmacher}) shows that a common null-vector is not a necessary condition for a matrix pencil to be singular. 

In this work, we focus on two possible settings for the singularity of matrix polynomials. The first possibility occurs when the coefficient matrices of the matrix polynomial have a common nontrivial kernel, that is,
\begin{equation}
\label{eq:vector_comm_ker}
    \exists x \neq 0 \; \mbox{such that} \; A_ix=0, \; \mbox{for } i=0,\ldots,d.
\end{equation}
The structured analysis presented in \cite{MehlMehrWoj} proves that this possibility takes place in the case of dissipative Hamiltonian matrix pencils and their corresponding generalization to matrix polynomials.
The second possibility occurs when the condition \eqref{eq:vector_comm_ker} is not satisfied.  In this case, an example is given by the quadratic matrix polynomial:
\begin{equation}
\label{pol_P1_introduction}
 P_{1} \left( \lambda \right)= \lambda^2 \left[ \begin{array}{c c}
    1  & 0 \\
    0  & 0
 \end{array} \right] + \lambda \left[ \begin{array}{c c}
    0  & 1 \\
    1  & 0
 \end{array} \right] + \left[ \begin{array}{c c}
    0  & 0 \\
    0  & 1
 \end{array} \right].
\end{equation}
It is straightforward to show that the determinant of $P_{1}\left( \lambda \right)$ is identically zero, but the coefficients do not have a common nontrivial kernel (in particular one of them is non-singular). The presence of a common kernel in the case of unstructured perturbations can be easily solved. Instead in the nontrivial case of structured perturbations the presence of a common kernel may suggest a different approach in the computation of the distance to singularity. In particular, in this setting, it is more appropriate to refer to the distance to the nearest polynomial whose coefficients have a nontrivial common kernel.

In situations where we do not impose additional structures, it is possible to compute the unstructured distance to singularity of a regular matrix polynomial. If we restrict the setting and consider only pencils, we may find a wide literature, see for instance \cite{ByHeMeh,DopicoNoferiniNyman}. Instead for the case of polynomials with a degree higher than $1$, first a method based on structured perturbations of block Toeplitz matrices containing the coefficients of the matrix polynomial has been introduced in \cite{GiesHaral}. Then recently Das and Bora \cite{DasBora} proposed a characterization of the nearest singular matrix polynomial to a regular one, using a method that involves convolution matrices. While their numerical method may be employed to compute the approximate distance to singularity, their theoretical results lead to a few upper bounds for the unstructured distance to singularity. We may also apply their bounds for testing the quality of our results, as we explain in Section \ref{sec:a posteriori}.
In this paper, we focus on the computation of a numerical upper bound for the structured distance to singularity. For this purpose, given a regular matrix polynomial, whose coefficients have a certain structure, we provide a method that computes a nearby matrix polynomial with the same degree and structure. This work aims to expand the recent article on matrix pencils \cite{GugLubMeh}, on one side extending the analysis from pencils to general polynomials and on the other side concentrating on both individual and collective structures of the coefficient matrices. To the best of our knowledge, this is the first attempt in the literature to propose a method for the computation of the structured distance to singularity, suitable for several situations and different kinds of additional structures on the coefficients.

The article is organized as follows. In Section \ref{sec: section ODE based approach} we provide several starting notions and formulate the considered matrix nearness problem as an optimization problem. In Section \ref{sec:Structured_distance to singularity} we propose a method for the numerical approximation of the structured distance to singularity. We introduce a functional associated with the optimization problem and propose a two-level iterative method for its numerical minimization. Then in Section \ref{sec:computationa-approach} we make some computational considerations and show some illustrative numerical examples. Finally in Section \ref{sec:distance_for_common_kernel} we address the problem of finding the closest polynomial whose coefficient matrices share a left/right common kernel, where we propose a method ad hoc; a few numerical experiments illustrate the behaviour of the method and conclude the article.

\section{An associated optimization problem}
\label{sec: section ODE based approach}

We use the following notation: given two matrices $A, B \in \mathbb{C}^{n  \times n}$, we denote by 
\[
\left\langle A,B \right\rangle = \mbox{trace} \left( A^H B \right)
\]
the Frobenius inner product on $\mathbb{C}^{n \times n}$ and by $\| A \|_F=\left(\sum_{i,j=1}^n \left| a_{ij}\right|^2 \right)^\frac{1}{2}$ the associated norm. Here $A^H$ indicates the conjugate transpose of the matrix $A$. We denote by $\mathcal{P}_d$ the set of square matrix polynomials of size $n \times n$ and degree $d$; i.e.
\begin{equation*} 
\label{eq:pol}
    P(\lambda)=\sum_{i=0}^d \lambda^i A_i, \quad A_i \in \mathbb{C}^{n \times n},
\end{equation*}
where $A_d$ is not identically zero. We associate to the polynomial $P(\lambda)$ the rectangular matrix 
\begin{equation}
\label{def:Polynomial_P}
\bold{A}= \left[ \begin{array}{c}
     A_d  \\
     A_{d-1}\\
     \vdots \\
     A_0
\end{array} \right]
\end{equation}
and we formally define $\| P \|= \| \bold{A} \|_F$.

\begin{definition}
\label{definition:distance_to_sing_unstr}
Given a matrix polynomial of degree $d$, $P \left( \lambda \right) \in \mathcal{P}_d$, we define the \emph{distance to singularity} as
\begin{equation*}
    d_{\mbox{sing}} \left( P  \right) = \min \left\lbrace \| \Delta P  \| : \; P (\lambda)  + \Delta P (\lambda)   \in \mathcal{P}_d \; \mbox{singular}  \right\rbrace.
\end{equation*}
Here $\Delta P \left( \lambda \right) = \sum_{i=0}^d \lambda^i \Delta A_i \ \in \mathcal{P}_d$.
\end{definition}

A matrix polynomial is singular if and only if $\det(P(\lambda))=0$ for all $\lambda$. Thus we look for $\Delta P \left( \lambda \right)$ such that
\begin{equation}
     \det \left( P \left( \lambda \right) + \Delta P \left( \lambda \right) \right) \equiv 0.
    \label{eq:determinant}
\end{equation}
Since the determinant  \eqref{eq:determinant}  is a scalar polynomial of degree lower or equal than $dn$, using the characterization given by the fundamental theorem of algebra, we have that the previous condition is equivalent for $m \geq dn+1$ to 
\begin{equation*}
    \sigma_{\min} \left( \mu_j^d \left( A_d + \Delta A_d \right) + 
    \ldots + \left( A_0 + \Delta A_0 \right) \right)=0, \quad j=1,\ldots,m,
\end{equation*}
with $\mu_1, \ldots , \mu_m$ distinct complex points and $\sigma_{\min}$ denotes the smallest singular value. Therefore we consider the optimization problem
\begin{align*}
\Delta P ^* &= \mbox{arg} \min_{\Delta P \in \mathcal{P}_d} \|\Delta P \|  \notag \\
& \mbox{subj. to} \; \sigma_{\min} \Bigl(  (P+\Delta P) (\mu_j) \Bigr) =0, \quad j=1,\ldots,m,
\label{eq: opt_pbm}
\end{align*}
where $\Delta P^*$ is associated with the array of perturbations
\begin{equation*}
    \Delta \bold{A}^*= \left[ \begin{array}{c}
     \Delta A_d^* \\
     \Delta A_{d-1}^*\\
     \vdots \\
     \Delta A_0^*
\end{array} \right].
\end{equation*}
With this formulation, we have that $d_{\mbox{sing}} \left( P  \right) = \| \Delta \bold{A}^* \|_F$.

\begin{remark}
The choice of the set of points $\mu_1, \ldots, \mu_m$ could affect the numerical resolution of the optimization problem. One possible choice is the set 
\[
\mu_j= \rho \, e^{\frac{2j\pi}{m}\textbf{i}}, \quad j=1,\ldots,m, \; \mbox{and} \; \rho >0
\]
where $\rho$ scales with the magnitude of the entries of $P\left(\lambda \right)$. Nevertheless, in several situations there is the possibility of more natural choices for $\mu_j$. For instance, in the presence of matrix polynomials with real entries, a reasonable choice may be the use of Chebyshev points. A deeper investigation of the influence of the set of points on the results is missing at the moment, but it is out of the scope of this article.
\end{remark}

\begin{remark}
The choice of employing the smallest singular values of the considered matrices is motivated by the aim to compute a numerically singular matrix (see for instance Chapter $5$ of \cite{GolubLoan}). Since our method relies on the numerical approximation of the solution of a non-convex optimization problem, we underline that the computed quantity is a numerically approximated upper bound of the distance to singularity, in the sense that it provides a nearby matrix polynomial which is numerically singular, whose distance from the given polynomial is larger or equal to the minimal one.

Moreover, in this paper we make use of the singular values of the matrices, instead of the eigenvalues, as proposed in \cite{GugLubMeh}. This avoids computational issues associated with the possible bad conditioning of the eigenvalues. 

As for singular vectors, we can monitor their conditioning by controlling the distance of singular values. Indeed a formula for the expansion of the right singular vector $v_1(\varepsilon)$ of the matrix $A(\varepsilon)$, associated with $\sigma_1(\varepsilon)$ is
\[
v_1(\varepsilon)= v_1 + \varepsilon \sum_{i=2}^n \frac{v_i^H A^H A v_i}{(\sigma_1^2-\sigma_i^2)v_i^Hv_i} (v_i -v_1v_1^Hv_i ) + \mathcal{O}(\varepsilon^2),
\]
where $\sigma_i$ and $v_i$ for $i=1,\ldots,n$ are the singular values and the associated right singular vectors, respectively, of the unperturbed matrix $A$. From a numerical point of view - however - what mainly matters is the conditioning at stationary points (and not along the trajectory) since (i) coalescence would not be detected numerically due to non-genericity and (ii) there is no need to follow the exact solution of the gradient system we associate the optimization problem. In case of ill-conditioning at stationary points, special care should be
directed to the eigenvalue solver, possibly augmenting the computational precision.
\end{remark}

\section{Structured distance to singularity}
\label{sec:Structured_distance to singularity}

In several contexts, it can be very useful to perform a structured analysis of the perturbations, allowing only perturbation matrices that respect the possible additional structures of the problem. In this Section, we propose a method for the numerical approximation of the structured distance to singularity. The strategy we propose may also be applied in the unstructured case. In this context, we compute the structured distance to singularity, defined in the following way:

\begin{definition}
\label{def:distance_to_sing_Structured}
Given a subset $\mathcal{S} \subseteq \mathbb{C}^{\left( d+1\right) n \times n}$. The structured distance to the nearest singular polynomial is given by
\begin{equation*}
    d_{\rm sing}^{\mathcal{S}} \left( P \right):=\min \left\lbrace \| \Delta P \|: \; P \left( \lambda \right) + \Delta P \left( \lambda \right) \in \mathcal{P}_d \; \mbox{singular}, \; \Delta \bold{A} \in \mathcal{S} \right\rbrace.
\end{equation*}
\end{definition}

The subset $\mathcal{S} \subseteq \mathbb{C}^{\left( d+1\right)n \times n}$ denotes the set of coefficient matrices with a prescribed additional structure. This structure may involve the single coefficients, including sparsity patterns, real entries, or the whole polynomial. We will give a few examples of possible additional structures in the following subsections. It is straightforward to observe that the unstructured distance to singularity may be seen as a particular situation in which the perturbations belong to $\mathcal{S}=\mathbb{C}^{\left(d+1\right)n\times n}$. We consider this case in Subsection \ref{subsec:Linear_structure}.

We employ the orthogonal projection $\Pi_{\mathcal{S}}$ onto the subspace $\mathcal{S}$ in order to allow only perturbations belonging to $\mathcal{S}$. Note that for $\Pi_{\mathcal{S}}$ orthogonal projection with respect to the Frobenius inner product onto $\mathcal{S}$, we have that: for every $\bold{Z} \in \mathbb{C}^{\left( d+1\right)n \times n}$
\begin{equation}
\label{eq:prop_orthogonal_projection}
    \Pi_{\mathcal{S}}\left(\bold{Z} \right) \in \mathcal{S}, \quad {\rm Re} \left\langle \Pi_{\mathcal{S}} \left( \bold{Z} \right), \bold{W}  \right\rangle = {\rm Re} \left\langle \bold{Z}, \bold{W}  \right\rangle, \; \forall \bold{W} \in \mathcal{S}.
\end{equation}

In a few situations, the projection may be straightforward to construct. For instance, consider $\mathcal{S}$ the space of matrices with real entries, then the projection $\Pi_{\mathcal{S}}\left(\bold{Z} \right)={\rm Re} \left( \bold{Z} \right)$. Instead, for $\mathcal{S}$ matrices with a sparsity pattern, the projection consists of the identity for the entries on the sparsity pattern and the null function on the remaining ones. Nevertheless, in the presence of different matrix spaces $\mathcal{S}$, the computation can be quite delicate (see Subsection \ref{subsec:Structure_on_whole_pol}, for instance).

In an analogous way to the approach in Section \ref{sec: section ODE based approach}, we construct the following optimization problem:
\begin{align}
\label{eq:opt_pbm_Struct_general}
\Delta \bold{A} ^* &= \mbox{arg} \min_{\Delta \bold{A} \in \mathcal{S}} \|\Delta \bold{A} \|_F  \\
& \mbox{subj. to} \; \sigma_{\min} \Bigl(  (P+\Delta P) (\mu_j) \Bigr) =0, \quad j=1,\ldots,m, \notag
\end{align}
with $\mu_1,\ldots,\mu_m$ distinct complex points.

\subsection{Methodology}

In order to solve the optimization problem \eqref{eq:opt_pbm_Struct_general}, the strategy we propose consists of rephrasing the problem into an equivalent one and solving it numerically by a two-step procedure. First we fix the value $\varepsilon >0$ of the norm of the perturbation and rewrite the perturbation  $\Delta \bold{A} = \eps \bDelta$ such that $\| \bDelta \|_F=1$ and $\bDelta\in \mathcal{S}$, with
\begin{equation*}
    \bDelta= \left[ \begin{array}{c}
     \Delta_d \\
     \Delta_{d-1}\\
     \vdots \\
     \Delta_0
\end{array} \right].
\end{equation*}

Since our goal is to find the smallest $\varepsilon$ that solves the equation
\begin{equation*}
    \sigma_{\min} \left( \mu_j^d \left( A_d + \varepsilon \Delta_d \right) +  \mu_j^{d-1} \left( A_{d-1} + \varepsilon \Delta_{d-1} \right) + \ldots + \left( A_0 + \varepsilon \Delta_0 \right) \right)=0,
\end{equation*}
for each $\mu_j, \; j=1, \ldots, m$, then a natural optimization problem associated to \eqref{eq:opt_pbm_Struct_general} is the following.

After defining the functional
\begin{equation}
    G_{\varepsilon} \left( \bDelta \right) = \frac{1}{2} \sum_{j=1}^m \sigma_{j}^2 \left(  \varepsilon, \bDelta \right),
    \label{functional_G}
\end{equation}
where $\sigma_{j} \left( \varepsilon , \bDelta \right)$ is the smallest singular value of the matrix 
\[
\mu_j^d \left( A_d+ \varepsilon \Delta_d \right) +\ldots +\left( A_0 + \varepsilon \Delta_0 \right),
\]
we aim to compute
\[
\oeps = \min\{\eps > 0 \,:\, G_{\varepsilon} \left( \bDelta \right)=0 \}.
\]

Our approach is summarized by the following two-level method:
\begin{itemize}

\item {\bf Inner iteration:\/} Given $\eps>0$, we aim to compute a  matrix $\bDelta(\eps) \in\cS$  
of unit Frobenius norm,  such that 
$G_\eps(\bDelta)$ is minimized, i.e. 
\begin{equation} \label{D-eps}
\bDelta(\eps) = \arg\min\limits_{\bDelta \in \cS, \| \bDelta \|_F = 1} G_\eps(\bDelta).
\end{equation}


\item {\bf Outer iteration:\/} We compute the smallest positive value $\oeps$ with
\begin{equation*} \label{eq:zero} 
g(\oeps)= 0,
\end{equation*}
where $g(\eps)=  G_{\varepsilon} \left( \bDelta(\eps) \right)$.
\end{itemize}

The two-level approach taken here uses an {\it inner iteration} to compute the solution of the optimization 
problem \eqref{D-eps} for a fixed perturbation size $\eps$, and then determines the optimal perturbation size $\oeps$ in an {\it outer iteration}.

The algorithm is not guaranteed to find the global optimum of these nonsmooth and nonconvex optimization problems, but it computes a matrix with the desired spectral property which is locally nearest and often, as observed in our numerical experiments, has a distance close to the minimal distance. In any case it provides an upper bound to the minimal distance, and usually a very tight one.
 Running the algorithm with several different
starting values reduces the risk of getting stuck in a local optimum.

\subsection{The inner iteration}
\label{subsection: inner iteration_struc}

 For simplicity of the notations, we indicate the singular values as $\sigma_j$, without the explicit dependence from $\varepsilon$. In order to minimize the functional \eqref{functional_G}, we use a constrained steepest descent method. We construct the perturbation matrices in the form $\varepsilon \Delta_i \left( t \right)$, depending on the parameter $t$. To satisfy the requirement $\| \bDelta (t) \|_F=1$ for all $t$, we impose
that
\begin{equation*}
    0= \frac{d}{dt} \| \bDelta \|_F^2= \frac{d}{dt} \left\langle \bDelta, \bDelta \right\rangle = \left\langle \dot{\bDelta}, \bDelta \right\rangle + \left\langle \bDelta, \dot{\bDelta} \right\rangle = 2 \mbox{Re} \left\langle \bDelta, \dot{\bDelta} \right\rangle,
\end{equation*}
from which we deduce the condition
\begin{equation}
\label{eq:Condition_conservation_norm}
    \mbox{Re} \left\langle \bDelta, \dot{\bDelta}\right\rangle  =0,
\end{equation}
where we have omitted the dependence of $t$ for conciseness. In order to compute the derivative of the functional \eqref{functional_G}, with respect to $t$, we use the standard result for eigenvalues (see e.g. \cite{Horn}, page $453$ for its extension to singular values), 
from which we can derive the following lemma for singular values:

\begin{lemma}
\label{lemma:derivative_sing_val}
Consider a continuously differentiable path of matrices $D(t)\in\C^{p \times q}$ for $t$ in an open interval $I$. Let $\sigma(t)$, $t\in I$, be a path of simple singular values of $D(t)$, with $\sigma(t) \neq 0$, for all $t \in I$.  Let $u(t)$ and $v(t)$ be left and right singular vectors of $D(t)$ to the singular value $\sigma(t)$, that is, $D(t) v(t) = \sigma(t) u(t)$ and $u(t)^H D(t)= \sigma(t) v(t)^H$ with $\|u(t)\|=\|v(t)\|=1$.
Then, $\sigma$ is differentiable on $I$ with the derivative
\begin{equation*}
\dot\sigma(t) = {\rm{Re}} \left(u(t)^H \dot D(t) v(t) \right).
\end{equation*}
\end{lemma}

Choosing 
\begin{equation*}
D\left( t \right)=\mu_j^d  \left( A_d + \varepsilon \Delta_d (t) \right) + \mu_j^{d-1} \left( A_{d-1} + \varepsilon \Delta_{d-1}(t) \right) + \ldots + \left( A_0 + \varepsilon \Delta_0(t) \right)
\end{equation*}
in Lemma \ref{lemma:derivative_sing_val} for each $j=1,\ldots, m$, we have that for $\sigma_j \left( t \right)$ simple singular value, with $u_j$ and $v_j$ left and right singular vectors, respectively:
\begin{align*}
    \frac{1}{2}\frac{d}{dt} \sigma_j^2 = \sigma_j \dot{ \sigma_j} &= \sigma_j \varepsilon \mbox{Re} \left( u_j^H \left(\sum_{k=0}^d \mu_j^k \dot{\Delta}_k \right) v_j \right)  \notag \\
    &= \varepsilon \left( \mbox{Re} \left\langle\sigma_j \Bar{\mu_j}^d u_j v_j^H, \dot{\Delta}_d \right\rangle +\ldots + \mbox{Re} \left\langle\sigma_j u_j v_j^H, \dot{\Delta}_0 \right\rangle \right).
\end{align*}
Note that the assumption that $\sigma_j$ is simple and nonzero for every $j$ is a generic property. Therefore we obtain that the derivative of \eqref{functional_G} can be expressed using the Frobenius inner product as follows:
\begin{equation}
    \frac{d}{dt} G_{\varepsilon} \left( t\right)= \varepsilon \left( \mbox{Re} \left\langle M_d, \dot{\Delta}_d \right\rangle + \mbox{Re} \left\langle M_{d-1}, \dot{\Delta}_{d-1} \right\rangle +\ldots + \mbox{Re} \left\langle M_0, \dot{\Delta}_0 \right\rangle \right),
    \label{eq:derivative_functional}
\end{equation}
where $M_k:=\sum_{j=1}^m \bar{\mu_j}^k \sigma_j u_j v_j^H$, for each $k=0,\ldots,d$. Relation \eqref{eq:derivative_functional} yields
\begin{equation*}
\frac{1}{\varepsilon} \frac{d}{dt} G_{\varepsilon} \left( t \right)={\rm Re} \left\langle \bold{M}, \dot{\bDelta} \right\rangle.
\end{equation*}
where 
\begin{equation*}
    \bold{M}=\left[ \begin{array}{c}
         M_d  \\
         M_{d-1} \\
         \vdots \\
         M_0
    \end{array} \right].
\end{equation*}

The structure requirements $\bDelta \in \mathcal{S}$ enter in the minimization of the functional \eqref{functional_G} as an additional constraint in the minimization problem. We get that given a path of matrices $\bDelta \left( t \right) \in \mathcal{S}$, then also $\dot{\bDelta}$ belongs to $\mathcal{S}$ and, starting from the derivative \eqref{eq:derivative_functional}, we have that
\begin{equation}
\label{eq:derivative_functional_G_projected}
    \frac{1}{\varepsilon} \frac{d}{d t} G_{\varepsilon} \left( \bDelta \right)= {\rm Re} \left\langle \Pi_{\mathcal{S}} \left( \bold{M} \right), \dot{\bDelta} \right\rangle,
\end{equation}
where we use the property \eqref{eq:prop_orthogonal_projection} of the orthogonal projection $\Pi_{\mathcal{S}}$. From these computations, we get the following result:

\begin{lemma}
\label{lem: minimization_struct_pert_general}
Consider $\bold{M} \in \mathbb{C}^{\left(d+1 \right) n \times  n}$ and $\bDelta \in \mathcal{S}$ s.t. $\| \bDelta \|_F=1$, not proportional to each other, and let $\bold{Z} \in \mathcal{S}$. A solution of the minimization problem
\begin{align}
\label{eq:solution_opt_pbm_struct_general}
    \bold{Z}^* &= \arg \min_{\bold{Z} \in \mathcal{S}} {\rm Re} \left\langle \Pi_{\mathcal{S}} \left( \bold{M} \right) , \bold{Z} \right\rangle \\
    & {\rm{subj. \; to}} \; {\rm{Re}} \left\langle \bDelta, \bold{Z} \right\rangle =0 \; {\rm{and}} \; \|  \bold{Z} \|_F=1, \notag
\end{align}
is given by 
\begin{equation} \label{eq:Zopt}
    \kappa \bold{Z}^* = -   \Pi_{\mathcal{S}} \left( \bold{M} \right) + \eta \bDelta,
\end{equation}
where $\eta={\rm Re} \left\langle \bDelta, \Pi_{\mathcal{S}} \left( \bold{M} \right) \right\rangle$ and $\kappa$ is the norm of the right-hand side.
\end{lemma}
\begin{proof}
The result follows by noting that the real part of the complex inner product on $\C^{p \times q}$ is a real inner product on $\R^{2p \times 2q}$, where in our case $p:=\left( d+1\right)n$ and $q:=m$, and the real inner product with a given vector (which here is a matrix) is maximized over a subspace by orthogonally projecting the vector onto that subspace. The expression in (\ref{eq:Zopt}) is the orthogonal projection of $-\Pi_{\cS} (\bold{M})$ to the orthogonal complement of the span of $\bDelta$, which is the tangent space at $\bDelta$ of the manifold of matrices of unit Frobenius norm.
Since $\bDelta,\Pi_{\cS} (\bold{M}) \in\cS$ in \eqref{eq:Zopt}, also $\mathbf{Z}^*$ is in $\cS$.
The norm conservation follows from $-{\rm Re} \langle   \bDelta, \Pi_{\mathcal{S}} \left( \bold{M} \right) \rangle + \eta \langle \bDelta, \bDelta \rangle = -{\rm Re} \langle   \bDelta, \Pi_{\mathcal{S}} \left( \bold{M} \right) \rangle + \eta = 0$. 
\end{proof}
For the functional $G_{\varepsilon}(\mathbf{\Delta})$, we have that the optimal steepest descent $\mathbf{Z}=\dot{\mathbf{\Delta}}$ is given by the solution of the constrained minimization problem \eqref{eq:solution_opt_pbm_struct_general}. Motivated by Lemma \ref{lem: minimization_struct_pert_general}, we consider the constrained gradient system
\begin{equation}
\label{eq:gradient_system_structured}
\dot{\bDelta}= - \Pi_{\mathcal{S}} \left( \bold{M} \right) + \eta \bDelta
\end{equation}
on the manifold of structured matrices in $\mathcal{S}$, of unit Frobenius norm. 
We prove now a sequence of results that allow us to optimize
the functional and characterize the stationary points of system \eqref{eq:gradient_system_structured} as (local) extremizers.

The first result (Theorem \ref{thm:G_decreases}) states that along solution trajectories of \eqref{eq:gradient_system_structured} the functional \eqref{functional_G} decreases monotonically in $t$.

The second one (Theorem \ref{thm:equivalence_charact}) characterizes stationary points
of the system \eqref{eq:gradient_system_structured} as real multiples of the gradient $\bold{M}$.

The third one (Lemma \ref{lem:nonzero-projected-gradient}) assures the non-vanishing property of the gradient which guarantees the applicability of Theorem \ref{thm:equivalence_charact} and of the integration of system \eqref{eq:gradient_system_structured} to compute solutions to the considered 
optimization problem.

\begin{theorem}
\label{thm:G_decreases}
Consider $\bDelta \left( t \right) \in \mathcal{S} \subseteq \mathbb{C}^{\left( d+1 \right) n \times n}$, with $\| \bDelta \left( t \right) \|_F=1$ and solution of the system \eqref{eq:gradient_system_structured}. Let $\sigma_j \left( t \right)$ be the simple smallest singular value of the matrix $\mu_j^d \left( A_d + \varepsilon \Delta_d \left( t \right) \right) + \mu_j^{d-1} \left( A_{d-1} + \varepsilon \Delta_{d-1} \left( t \right) \right)+ \ldots + \left( A_0 + \varepsilon \Delta_0 \left( t \right) \right) $ and assume $\sigma_j(t) \neq 0$ for each $j=1,\ldots,m$. Then:
\begin{equation*}
    \frac{d}{dt} G_{\varepsilon} \left( \bDelta \left( t \right) \right) \leq 0.
\end{equation*}
\end{theorem}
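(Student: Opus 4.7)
The plan is to insert the solution of the gradient system directly into the expression derived just before the theorem and check that the result is non-positive. Specifically, from the computation leading to \eqref{eq:derivative_functional} we already have
\[
\frac{1}{\varepsilon}\frac{d}{dt} G_{\varepsilon}(t) = {\rm Re}\left\langle \bold{M}, \dot{\bold{\Delta}} \right\rangle,
\]
so the only thing to do is substitute $\dot{\bold{\Delta}} = -\bold{M} + \eta \bold{\Delta}$ from \eqref{eq:gradient_system}, with $\eta = {\rm Re}\langle \bold{\Delta}, \bold{M}\rangle$.

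Carrying out that substitution, bilinearity of ${\rm Re}\langle\cdot,\cdot\rangle$ gives
\[
{\rm Re}\left\langle \bold{M}, -\bold{M} + \eta \bold{\Delta}\right\rangle
= -\|\bold{M}\|_F^2 + \eta \,{\rm Re}\left\langle \bold{M}, \bold{\Delta}\right\rangle
= -\|\bold{M}\|_F^2 + \eta^2.
\]
Since the trajectory preserves the normalization, $\|\bold{\Delta}(t)\|_F = 1$ for all $t$ (this is exactly condition \eqref{eq:Condition_conservation_norm}, which is built into \eqref{eq:gradient_system} by construction of the projection in Lemma \ref{lem: minimization}). The Cauchy--Schwarz inequality for the real Frobenius inner product then yields
\[
\eta^2 = \bigl({\rm Re}\langle \bold{\Delta}, \bold{M}\rangle\bigr)^2 \leq \|\bold{\Delta}\|_F^2 \,\|\bold{M}\|_F^2 = \|\bold{M}\|_F^2,
\]
whence $\frac{1}{\varepsilon}\frac{d}{dt} G_{\varepsilon}(t) \leq 0$, and multiplying by $\varepsilon > 0$ gives the claim.

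I do not expect any real obstacle here: the assumptions that each $\sigma_j$ is simple and nonzero are precisely what ensures that the differentiability statement of Lemma \ref{lemma:derivative_sing_val} applies to each summand of $G_\varepsilon$, which in turn makes \eqref{eq:derivative_functional} valid. The only minor point worth mentioning explicitly in the write-up is that equality in the Cauchy--Schwarz step (and hence stationarity of $G_\varepsilon$) occurs exactly when $\bold{M}$ is a real scalar multiple of $\bold{\Delta}$, which matches the non-proportionality hypothesis of Lemma \ref{lem: minimization} and characterizes the equilibria of the gradient flow; this observation will be useful later for the outer iteration on $\varepsilon$, but is not needed for the monotonicity statement itself.
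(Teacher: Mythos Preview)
Your proposal is correct and is exactly the approach the paper has in mind: the paper's own proof is a one-line remark that the result follows by substituting \eqref{eq:gradient_system} into \eqref{eq:derivative_functional}, and you have carried out that substitution explicitly, closing with Cauchy--Schwarz. Nothing is missing; your added remark on the equality case also correctly anticipates Theorem~\ref{thm:equivalence_charact}.
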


\begin{proof}
Since the singular values $\sigma_j(t) \neq 0$ for each $j=1,\ldots,m$, then they are differentiable and we may use Lemma \ref{lemma:derivative_sing_val} in order to express their derivatives. Since $\| \mathbf{\Delta} \|_F=1$, then we observe that
\[
0=\frac{d}{dt} \| \mathbf{\Delta} \|_F^2= 2 {\rm Re} \left\langle \mathbf{\Delta}, \dot{\mathbf{\Delta}} \right\rangle, \; {\rm which \; implies} \; {\rm Re} \left\langle \mathbf{\Delta}, \dot{\mathbf{\Delta}} \right\rangle=0.
\]
Then, using the expression of $\dot{\mathbf{\Delta}}$ in \eqref{eq:gradient_system_structured}, we have:
\begin{equation*}
    \| \dot{\mathbf{\Delta}} \|_F^2 = \left\langle \dot{\mathbf{\Delta}} , \dot{\mathbf{\Delta}} \right\rangle = {\rm Re} \left\langle -\Pi_{\mathcal{S}}\left( \mathbf{M} \right) + \eta \mathbf{\Delta}, \dot{\mathbf{\Delta}} \right\rangle =  -{\rm Re} \left\langle \Pi_{\mathcal{S}}\left( \mathbf{M} \right), \dot{\mathbf{\Delta}} \right\rangle + \eta \underbrace{{\rm Re} \left\langle \mathbf{\Delta} , \dot{\mathbf{\Delta}}\right\rangle}_{=0},
\end{equation*}
from which we derive that
\begin{equation}
\label{eq:formula_for_derivative_G}
    \frac{d}{dt} G_{\varepsilon}\left( \mathbf{\Delta} \right)= \varepsilon {\rm Re} \left\langle \Pi_{\mathcal{S}}\left(\mathbf{M} \right), \dot{\mathbf{\Delta}} \right\rangle = - \varepsilon \| \dot{\mathbf{\Delta}} \|_F^2 \leq 0,
\end{equation}
where the first equality is given by \eqref{eq:derivative_functional_G_projected}.
\end{proof}

The stationary points of the gradient system \eqref{eq:gradient_system_structured} are possible minimizers for the functional $G_{\varepsilon} \left( \bDelta \right)$ and can be characterized in the following way:

\begin{theorem}
\label{thm:equivalence_charact}
Consider $\bDelta(t)$ of unit Frobenius norm, satisfying the gradient system \eqref{eq:gradient_system_structured}. Assume that for all $t$, it holds $G_{\varepsilon} \left( \bDelta \right) >0$. Moreover, suppose that the singular values $\sigma_j$ of $\mu_j^{d} \left( A_d + \varepsilon \Delta_d \right) + \ldots + \left( A_0 + \varepsilon \Delta_0 \right)$ are simple and non-zero, for $j=1,\ldots,m$, and consider $u_j,v_j$ the associated left and right singular vectors, respectively. Then the following are equivalent:
\begin{enumerate}
    \item[(i)] $\frac{d}{dt} G_{\varepsilon}  \left( \bDelta \right) =0$;
    \item[(ii)] $\dot{\bDelta}=0$;
    \item[(iii)] $\bDelta$ is a real multiple of the matrix $\Pi_{\mathcal{S}}\left( \bold{M} \right)$.
\end{enumerate}
\end{theorem}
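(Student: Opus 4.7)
The plan is to prove the three equivalences via the cycle $(ii) \Rightarrow (i) \Rightarrow (iii) \Rightarrow (ii)$, working directly from \eqref{eq:derivative_functional} and the gradient system \eqref{eq:gradient_system}. The implication $(ii) \Rightarrow (i)$ is immediate from the derivative formula: plugging $\dot{\bold{\Delta}} = 0$ into $\frac{1}{\varepsilon}\frac{d}{dt}G_\varepsilon = \mathrm{Re}\langle \bold{M}, \dot{\bold{\Delta}}\rangle$ gives $\dot G_\varepsilon = 0$. For $(iii) \Rightarrow (ii)$, I would assume $\bold{\Delta} = \alpha\bold{M}$ with $\alpha \in \R$, note that $\eta = \mathrm{Re}\langle \bold{\Delta}, \bold{M}\rangle = \alpha\|\bold{M}\|_F^2$ and that the unit-norm constraint gives $\alpha^2\|\bold{M}\|_F^2 = 1$; substituting into \eqref{eq:gradient_system} one gets $\dot{\bold{\Delta}} = -\bold{M} + \alpha^2\|\bold{M}\|_F^2\,\bold{M} = 0$.

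The substantive step is $(i) \Rightarrow (iii)$. I would substitute \eqref{eq:gradient_system} into the derivative formula and use $\dot G_\varepsilon = 0$ to deduce
\[
0 = \mathrm{Re}\langle \bold{M}, -\bold{M} + \eta\bold{\Delta}\rangle = -\|\bold{M}\|_F^2 + \eta^2,
\]
so $\eta^2 = \|\bold{M}\|_F^2$. Combined with $\|\bold{\Delta}\|_F = 1$, this saturates the chain
\[
|\eta| = |\mathrm{Re}\langle \bold{\Delta}, \bold{M}\rangle| \leq |\langle \bold{\Delta}, \bold{M}\rangle| \leq \|\bold{\Delta}\|_F\|\bold{M}\|_F = \|\bold{M}\|_F.
\]
Equality in the first bound forces $\langle \bold{\Delta}, \bold{M}\rangle \in \R$, while equality in the complex Cauchy--Schwarz step forces $\bold{\Delta}$ to be a complex scalar multiple of $\bold{M}$; together they give $\bold{\Delta} = \alpha\bold{M}$ with $\alpha \in \R$, which is exactly $(iii)$.

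The main obstacle I anticipate is handling the Cauchy--Schwarz equality case correctly: one genuinely needs to unpack \emph{both} tightness conditions, since complex proportionality alone does not guarantee a real multiplier. A minor subtlety is that $(iii)$ implicitly requires $\bold{M} \neq 0$ to be nonvacuous; in the degenerate case $\bold{M} = 0$ the gradient system already yields $\dot{\bold{\Delta}} = 0$, while no unit-norm $\bold{\Delta}$ can be a scalar multiple of the zero matrix, so the theorem is to be read with this non-degeneracy tacitly assumed alongside the standing hypothesis $G_\varepsilon(\bold{\Delta}) > 0$.
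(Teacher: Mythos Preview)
Your proof is correct and considerably more explicit than the paper's, which consists of the single sentence ``The proof is a direct consequence of equating to zero the right hand side of the system \eqref{eq:gradient_system}.'' That one-liner most naturally delivers $(ii)\Leftrightarrow(iii)$; the link with $(i)$ is implicit in the gradient-system structure via the identity
\[
\frac{1}{\varepsilon}\frac{d}{dt}G_\varepsilon
= \mathrm{Re}\langle \bold{M},\dot{\bold{\Delta}}\rangle
= \mathrm{Re}\langle \bold{M},-\bold{M}+\eta\bold{\Delta}\rangle
= -\|\bold{M}\|_F^2+\eta^2
= -\|\dot{\bold{\Delta}}\|_F^2,
\]
which gives $(i)\Leftrightarrow(ii)$ at once. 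You instead prove $(i)\Rightarrow(iii)$ directly by a Cauchy--Schwarz equality argument; this is a perfectly valid alternative, just slightly less streamlined than reading off $\dot G_\varepsilon=-\varepsilon\|\dot{\bold{\Delta}}\|_F^2$ and going through $(ii)$. One small remark on your closing caveat: under the stated hypotheses $\bold{M}\neq 0$ is automatic, since $\langle \bold{M},\bold{A}+\varepsilon\bold{\Delta}\rangle=\sum_{j=1}^m\sigma_j>0$ (this is exactly the computation behind Lemma~\ref{lem:nonzero-projected-gradient}), so the non-degeneracy you flag is not an extra tacit assumption.
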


\begin{proof}
\emph{(iii)}$\implies$\emph{(ii)}: from $\eta={\rm Re}\left\langle \Pi_{\mathcal{S}}\left( \mathbf{\Delta} \right), \mathbf{\Delta} \right\rangle$ and $\| \mathbf{\Delta}\|_F=1$, we get  $\Pi_{\mathcal{S}}\left(\mathbf{M}\right) = \eta \mathbf{\Delta}$. Then using \eqref{eq:gradient_system_structured}, we obtain $\dot{\mathbf{\Delta}}= 0$.

\emph{(ii)}$\implies$\emph{(iii)}: using the expression in \eqref{eq:gradient_system_structured}, we get $0=\dot{\mathbf{\Delta}}= - \Pi_{\mathcal{S}}\left( \mathbf{M}\right) + \eta \mathbf{\Delta}$, from which we get $\Pi_{\mathcal{S}}\left( \mathbf{M} \right) = \eta \mathbf{\Delta}$.

\emph{(ii)}$\implies$\emph{(i)}: using the manipulation in \eqref{eq:formula_for_derivative_G} and the fact that $\dot{\mathbf{\Delta}}=0$, we get that $\frac{d}{dt} G_{\varepsilon}(\mathbf{\Delta})= 0$. 

\emph{(i)}$\implies$\emph{(iii)}: from the expression  \eqref{eq:derivative_functional_G_projected}, we get
\begin{align*}
\frac{1}{\varepsilon} \frac{d}{dt} G_{\varepsilon}(\mathbf{\Delta}) = \mbox{Re}\left\langle \Pi_{\mathcal{S}}(\mathbf{M}), \dot{\mathbf{\Delta}} \right\rangle &= \mbox{Re}\left\langle \Pi_{\mathcal{S}}(\mathbf{M}), -\Pi_{\mathcal{S}}(\mathbf{M}) + \eta \mathbf{\Delta} \right\rangle \\
&= -\|\Pi_{\mathcal{S}}(\mathbf{M}) \|^2_F + \left(\mbox{Re}\left\langle \Pi_{\mathcal{S}}(\mathbf{M}), \mathbf{\Delta} \right\rangle\right)^2 \leq 0,
\end{align*}
where we use that $\eta=\mbox{Re}\left\langle \Pi_{\mathcal{S}}(\mathbf{M}), \mathbf{\Delta} \right\rangle$ and the last inequality holds using the Cauchy-Schwarz inequality and the property $\| \mathbf{\Delta}\|_F=1$. Moreover, the inequality is strict unless we have
\[
\Pi_{\mathcal{S}}(\mathbf{M})= \eta \mathbf{\Delta}, \quad \eta =\pm \| \Pi_{\mathcal{S}}(\mathbf{M}) \|_F.
\]
\end{proof}

The following proves the important property that the structured gradient cannot vanish. In fact, if the gradient vanished we could not characterize stationary points of the gradient system \eqref{eq:gradient_system_structured} as multiple of $\Pi^\cS (\bold{M})$.
\begin{lemma} [Non-vanishing structured gradient]
\label{lem:nonzero-projected-gradient}
Let   $\bold{A},\bDelta\in \cS$ and $\eps>0$, and let 
\begin{equation*}
\sigma_j = \sigma_{\min} \left( \mu_j^d \left( A_d +  \varepsilon 
 \Delta_d \right) + 
    \ldots + \left( A_0 + \varepsilon \Delta_0 \right) \right), \quad j=1,\ldots,m,
\end{equation*}
$\sigma_{\min}$ denoting the smallest singular value.
 
Let $\cS$ be a complex/real-linear subspace of $\C^{\left(d +1 \right)n \times n}$ to which the matrices $\bold{A}$ and $\bDelta$ belong. 
Then, 
\[
\Pi_{\mathcal{S}} \left( \bold{M} \right) \ne 0 \quad\text{ if } \quad \sum\limits_{j=1}^{m} \sigma_j \ne 0.
\]
\end{lemma}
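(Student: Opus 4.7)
The plan is to test $\bold{M}$ against the one structured matrix that is naturally available, namely $\bold{A}+\varepsilon\bold{\Delta}$, which belongs to $\cS$ because $\cS$ is a linear subspace containing both $\bold{A}$ and $\bold{\Delta}$. The key identity I would derive, by direct block-wise expansion of the Frobenius inner product together with the singular value relations $(P+\varepsilon\Delta P)(\mu_j)v_j=\sigma_j u_j$, is
\begin{equation*}
\langle\bold{M},\,\bold{A}+\varepsilon\bold{\Delta}\rangle \;=\; \sum_{j=1}^m \sigma_j^{\,2}.
\end{equation*}
Combined with the orthogonality property \eqref{eq:prop_orthogonal_projection} of $\Pi_\cS$, this forces $\Pi_\cS(\bold{M})$ to have a strictly positive pairing with an element of $\cS$ whenever some $\sigma_j\neq 0$, and the conclusion follows by contraposition.

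First I would carry out the block-wise computation. Plugging in $M_k=\sum_{j=1}^m\bar\mu_j^{\,k}\sigma_j u_j v_j^H$ and exploiting cyclicity of the trace gives
\begin{equation*}
\langle M_k,\,A_k+\varepsilon\Delta_k\rangle \;=\; \sum_{j=1}^m \mu_j^{\,k}\,\sigma_j\,u_j^H(A_k+\varepsilon\Delta_k)v_j .
\end{equation*}
Summing over $k$ and grouping the coefficients $\mu_j^{\,k}(A_k+\varepsilon\Delta_k)$ reconstructs the perturbed polynomial evaluated at $\mu_j$, so that the total reduces to $\sum_j \sigma_j\,u_j^H(P+\varepsilon\Delta P)(\mu_j)v_j = \sum_j \sigma_j^{\,2}$. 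In particular the inner product is real, so replacing it by its real part changes nothing.

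Second, I would apply the orthogonal-projection identity \eqref{eq:prop_orthogonal_projection} with $\bold{W}=\bold{A}+\varepsilon\bold{\Delta}\in\cS$ to obtain
\begin{equation*}
{\rm Re}\langle \Pi_\cS(\bold{M}),\,\bold{A}+\varepsilon\bold{\Delta}\rangle \;=\; {\rm Re}\langle \bold{M},\,\bold{A}+\varepsilon\bold{\Delta}\rangle \;=\; \sum_{j=1}^m \sigma_j^{\,2}.
\end{equation*}
If $\Pi_\cS(\bold{M})=0$, the left-hand side vanishes, forcing $\sigma_j=0$ for every $j$ and in particular $\sum_j\sigma_j=0$. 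Contrapositively, $\sum_j\sigma_j\neq0$ implies $\Pi_\cS(\bold{M})\neq0$, which is exactly the claim.

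The argument is short and essentially formal, so I do not expect a real obstacle; the only point requiring care is the bookkeeping of conjugates, i.e.\ checking that the $\bar\mu_j^{\,k}$ placed inside the definition of $M_k$ combines with the Hermitian conjugate in the trace to produce $\mu_j^{\,k}$, so that the sum over $k$ collapses neatly into an evaluation of the perturbed polynomial at $\mu_j$. At heart the lemma is the observation that the ``canonical'' direction $\bold{A}+\varepsilon\bold{\Delta}$ inside $\cS$ already detects non-vanishing of $\Pi_\cS(\bold{M})$ through a quantity equal to the sum of squared smallest singular values.
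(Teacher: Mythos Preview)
Your proof is correct and follows the same strategy as the paper: pair $\bold{M}$ with the structured element $\bold{A}+\varepsilon\bold{\Delta}\in\cS$ and use \eqref{eq:prop_orthogonal_projection}. The only difference is that your block-wise computation yields $\sum_j\sigma_j^2$ while the paper's proof records $\sum_j\sigma_j$; your value is in fact the correct outcome of the inner product (the paper drops a factor $\sigma_j$ in passing from $\langle M_k,A_k+\varepsilon\Delta_k\rangle$ to the next line), but since singular values are nonnegative both quantities vanish exactly when every $\sigma_j=0$, so the conclusion is unaffected.
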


\begin{proof}
    We take the  inner product of $\Pi_{\mathcal{S}} \left( \bold{M} \right)$ with $\bold{A}+\eps \bDelta\in \cS$: 
    \begin{align*}
    &\big\langle \Pi_{\mathcal{S}} \left( \bold{M} \right), \bold{A}+\eps \bDelta \big\rangle =
    \big\langle \bold{M},\bold{A}+\eps \bDelta \big\rangle 
    \\
    &=  \sum\limits_{k=0}^{d} \Big\langle M_k,A_k+\eps \Delta_k \Big\rangle =
    \sum\limits_{k=0}^{d} \Big\langle  \sum_{j=1}^m \bar{\mu_j}^k \sigma_j u_j v_j^H, A_k + \eps \Delta_k \Big\rangle =
    \\
    &= \sum_{j=1}^m\Bigl(  \sum\limits_{k=0}^{d} \mu_j^k u_j^H(A_k+\eps \Delta_k) v_j \Bigr) = 
    \sum\limits_{j=1}^{m} \sigma_j,
    \end{align*}
    where the first equality is derived using the property expressed in formula \eqref{eq:prop_orthogonal_projection} (this can be done since $\mathbf{A} + \varepsilon \mathbf{\Delta}$ belongs to $\mathcal{S}$).
    This yields the result.
\end{proof}

\subsection{The outer iteration}
\label{subsection: outer iteration_struct}

We are interested in computing the smallest value of $\varepsilon$ for which the perturbed matrix polynomial $P\left( \lambda \right) + \Delta P \left( \lambda \right)$ is singular. This means we search for
$\varepsilon^* = \min \left\lbrace \varepsilon > 0 : \; g \left( \varepsilon \right)=0 \right\rbrace$,
where $g \left( \varepsilon \right):=G_{\varepsilon} \left( \bDelta\left( \varepsilon \right) \right)$
and $\bDelta\left( \varepsilon \right)$ is a smooth path of stationary points of \eqref{eq:gradient_system_structured}. The idea consists of tuning the value of $\varepsilon
$, in order to approximate $\varepsilon^*$. We perform a Newton-bisection method, as proposed in \cite{GugLubMeh}. We make the following generic assumption:

\begin{assum}
\label{Assumption_smooth}
The smallest singular values $\sigma_j \left( \varepsilon \right)$ of the matrices 
\[
\mu_j^d \left( A_d + \varepsilon \Delta_d \left( \varepsilon \right) \right) + \mu_j^{d-1} \left( A_{d-1} + \varepsilon \Delta_{d-1} \left( \varepsilon \right) \right) +\ldots + \left( A_0 + \varepsilon \Delta_0 \left( \varepsilon \right) \right)
\]
are simple and different from zero and the functions $\sigma_j \left( \varepsilon \right)$, $\Delta_i \left( \varepsilon \right)$ are smooth w.r.t. $\varepsilon$, for $j=1,\ldots,m$ and $i=0,\ldots,d$.
\end{assum}
This assumption allows us to provide an expression for the derivative of $G\left( \varepsilon \right)$ with respect to $\varepsilon$. Indeed the expression of the derivative of the functional $g(\varepsilon)=G_{\varepsilon}(\mathbf{\Delta}(\varepsilon))$ involves the derivatives of the singular values $\sigma_j(\varepsilon)$ for $j=1,\ldots,m$ w.r.t. $\varepsilon$. The hypotheses on the simplicity and the non-vanishing property on the singular values $\sigma_j(\varepsilon)$ allow us to apply Lemma \ref{lemma:derivative_sing_val}.

\begin{theorem}
\label{thm:Derivative_wrt_eps}
Let $P\left( \lambda \right)= \lambda^d A_d + \lambda^{d-1} A_{d-1} + \ldots + A_0$ a regular matrix polynomial in $\mathbb{C}^{n \times n}$, and $P\left( \lambda \right) + \Delta P \left( \lambda \right) = \lambda^d \left( A_d + \varepsilon \Delta_d \left( \varepsilon \right) \right) + \lambda^{d-1} \left( A_{d-1} + \varepsilon \Delta_{d-1} \left( \varepsilon \right) \right) + \ldots + \left( A_0 + \varepsilon \Delta_0 \left( \varepsilon \right) \right)$, which satisfies the following properties:
\begin{enumerate}
    \item[(i)] $\varepsilon \in \left( 0, \varepsilon^* \right)$, with $g \left( \varepsilon  \right) >0$;
    \item[(ii)] Assumption $\ref{Assumption_smooth}$ holds;
\end{enumerate}
Then:
\begin{equation*}
    \frac{d}{d \varepsilon} g \left( \varepsilon \right) = - \|\Pi_{\mathcal{S}}\left(\bold{M}\left( \varepsilon \right)\right) \|_F.
\end{equation*}
\end{theorem}

\begin{proof}
Since by assumption the singular values $\sigma_j(\varepsilon)$ are simple and different from zero for $j=1,\ldots,m$, we can employ Lemma \ref{lemma:derivative_sing_val} and obtain for $j=1,\ldots,m$:
\begin{align*}
   \frac{1}{2} \frac{d}{d\varepsilon} \sigma_j^2 (\varepsilon) &= \sigma_j(\varepsilon) \frac{d}{d\varepsilon} \sigma_j(\varepsilon) \\
   &= \left( \mbox{Re} \left\langle\sigma_j(\varepsilon) \Bar{\mu_j}^d u_j  v_j^H , {\Delta}_d(\varepsilon)  + \varepsilon \frac{d}{d \varepsilon}{\Delta}_d(\varepsilon)  \right\rangle +\ldots \right.\\
   &\left. + \ldots + \mbox{Re} \left\langle \sigma_j(\varepsilon) u_j v_j^H , {\Delta}_0(\varepsilon) + \varepsilon\frac{d}{d \varepsilon}{\Delta}_0(\varepsilon)  \right\rangle \right),
\end{align*}
where we drop the dependence w.r.t. $\varepsilon$ from $u_j(\varepsilon),v_j(\varepsilon)$ the right and left singular vectors associated with $\sigma_j$, respectively. Similarly to \eqref{eq:derivative_functional} and \eqref{eq:derivative_functional_G_projected}, we can derive:
\begin{equation}
\label{eq:derivative_G_wrt_eps_proof}
    \frac{d}{d\varepsilon} G(\varepsilon)= \mbox{Re} \left\langle \Pi_{\mathcal{S}}(\mathbf{M}(\varepsilon)), \mathbf{\Delta}(\varepsilon) + \varepsilon \frac{d}{d\varepsilon} \mathbf{\Delta}(\varepsilon) \right\rangle
\end{equation}
By definition $\mathbf{\Delta}(\varepsilon)$ is a smooth path of stationary points of \eqref{eq:gradient_system_structured}. Then we use the equivalence \emph{(ii)} $\iff$ \emph{(iii)} in Theorem \ref{thm:equivalence_charact} and obtain that
\begin{equation}
\bDelta \left( \varepsilon \right) = \pm \gamma 
\Pi_{\mathcal{S}}\left(\bold{M}\left( \varepsilon \right)\right), \; \mbox{with} \; \gamma=\frac{1}{\|\Pi_{\mathcal{S}}\left(\bold{M}\left( \varepsilon \right)\right) \|_F}.
\label{eq:condition on Delta in der wrt eps}
\end{equation}
Since $\| \bDelta \left( \varepsilon \right) \|_F=1$ for all $\varepsilon$, we have that
\begin{equation}
\label{eq:derivata_Delta_equal_0}
    0= \mbox{Re} \left\langle \bDelta\left( \varepsilon \right) , \frac{d}{d \varepsilon} \bDelta \left( \varepsilon \right) \right\rangle = \gamma \mbox{Re} \left\langle \Pi_{\mathcal{S}}\left(\bold{M}\left( \varepsilon \right)\right), \frac{
    d}{d \varepsilon} \bDelta \left( \varepsilon \right) \right\rangle,
\end{equation}
Substituting the relation \eqref{eq:derivata_Delta_equal_0} in the formula given by \eqref{eq:derivative_G_wrt_eps_proof}, we have
\[
\frac{d}{d\varepsilon}g(\varepsilon)=\frac{d}{d\varepsilon}G(\varepsilon)=\pm \gamma \| \Pi_{\mathcal{S}}(\mathbf{M}) \|_F.
\]
Since by assumption $g(\varepsilon)>0$ for each $\varepsilon< \varepsilon^*$ and $g(\varepsilon^*)=0$ by definition of $\varepsilon^*$, we get that the derivative $\frac{d}{d\varepsilon}g(\varepsilon) <0$, which implies
\[
\frac{d}{d\varepsilon} g(\varepsilon)= - \| \Pi_{\mathcal{S}}(\mathbf{M}) \|_F.
\]
\end{proof}

Using Assumption \ref{Assumption_smooth}, we get that the function $\varepsilon \rightarrow g(\varepsilon)$ is smooth for $\varepsilon < \varepsilon^*$ and its derivative w.r.t. $\varepsilon$ is given in Theorem \ref{thm:Derivative_wrt_eps}. Therefore, we can apply Newton's method:
\begin{equation}
\label{eq:Newton iteration}
    \varepsilon_{k+1} = \varepsilon_k - \left( \| 
    \Pi_{\mathcal{S}}\left(\bold{M}\left( \varepsilon_k \right)\right)\|_F \right)^{-1} g \left( \varepsilon_k \right), \; k =1,\ldots.
\end{equation}
Since local convergence to $\varepsilon^*$ for Newton's method is guaranteed from the left, we need to correct the approach using a bisection method for the approximation by the right.

\begin{remark}
If instead of applying the two-level iteration we just integrated the gradient system 
\eqref{eq:gradient_system_structured} without imposing the norm conservation, that is the system of ODEs
\begin{equation*}
\dot{\bDelta}= - \Pi_{\mathcal{S}} \left( \bold{M} \right) 
\end{equation*}
we would still have a gradient system for the unconstrained problem 
\[
\min\limits_{\bDelta \in \cS} G_{\varepsilon} \left( \bDelta \right) = \frac{1}{2} 
\sum_{j=1}^m \sigma_{j}^2 \left(  \varepsilon, \bDelta \right) = 0
\]
but we would not obtain in general the smallest (in norm) solution $\bDelta$.

Since we are looking for the solution $\bDelta$ of minimum norm,
this explains the use of the two-level formulation of the constrained 
optimization problem. For instance, consider the matrix polynomial $P_{\delta}\left( \lambda \right)$, with $\delta=0.9$ in Example \ref{ex:polyn_near_to_sing}. If we employ the gradient system for the unconstrained problem we obtain an approximation of the distance to singularity of $3.1907$, while the result obtained with our two-level formulation is $0.5461$.

Also note that if $\widehat \bDelta$ is such that $G_{\varepsilon} ( \widehat \bDelta )=0$ then
\[
\bA + \eps \widehat\bDelta + (\bA + \eps \widehat\bDelta) s, \qquad s \in \R
\]
also defines a solution to the unconstrained minimization problem.

This implies the property 
\begin{equation*}
g(\eps) = 0 \qquad \mbox{for} \ \eps \ge \eps^*.
\end{equation*}
\end{remark}

\subsection{Fixed coefficients}
\label{subsec:Fixed_coefficients}

In several situations, some coefficient matrices of $P\left( \lambda \right)$ may derive from specific characteristics of the problem and then it may be not meaningful to perturb these coefficients. Consider a set of indices $I \subset \left\lbrace 0, 1, \ldots , d \right\rbrace$, with cardinality $\left| I \right| \le d$. Then we are interested in computing the distance
\begin{equation*}
d_{\rm sing}^I \left(P  \right):= \min \left\lbrace \| \Delta P \| : P \left( \lambda \right) + \Delta P \left( \lambda \right) \in \mathcal{P}_d  \; \mbox{singular}, \; \Delta A_i =0 \; \forall i \in I \right\rbrace.
\end{equation*}

The method can be applied in the same way minimizing the functional $G_{\varepsilon} \left( \bDelta \right)$, with the constraint 
\begin{equation*}
    \Delta_i(t) \equiv 0 , \; \mbox{for each} \; i \in I.
\end{equation*}

In this case, the algorithm can be modified using a specialization of Lemma \ref{lem: minimization_struct_pert_general} and computing the stationary points of the ODE
\begin{equation*}
    \dot{\Delta_i}= - M_i + \eta \Delta_i, \quad \mbox{for} \; i \not\in I,
\end{equation*}
where $\eta=\sum_{i \not\in I} \mbox{Re}\left\langle \Delta_i, M_i  \right\rangle$.

\subsection{Linear structure}
\label{subsec:Linear_structure}

In this Subsection, we consider the case in which the coefficients of the matrix polynomial $P\left( \lambda \right)$ have an additional structure. In particular, we focus on polynomials whose coefficients are such that
\begin{equation*}
    A_i \in \mathcal{S}_i \; \mbox{for each} \; i=0,\ldots,d.
\end{equation*}
and therefore we consider perturbations $\Delta \bold{A}$ in the subspace
\begin{equation*}
\mathcal{S}:= \left\lbrace \Delta \bold{A}: \; \Delta A_i \in \mathcal{S}_i, \; i=0,\ldots,d \right\rbrace.    
\end{equation*}
For simplicity, we consider linear subspaces $\mathcal{S}_i \subseteq \mathbb{C}^{n \times n}$: for instance, they can be the set of sparse matrices or real matrices. Note that the matrix subspaces $\mathcal{S}_i$ can be both different from each other and all the same: for instance $\mathcal{S}_i= \mathbb{R}^{n \times n}$, for $i=0,\ldots,d$ for the case of polynomials with real coefficients. Then in general for each subspace $\mathcal{S}_i$, we construct a different orthogonal projection $\Pi_{\mathcal{S}_i}$, which preserves the structure for the perturbation matrix $\Delta_i$. In this context, we obtain the constrained gradient system
\begin{equation*}
    \dot{\bDelta}= - \bold{\Pi} + \eta \bDelta,
\end{equation*}
where 
\begin{equation*}
    \bold{\Pi}=\left[ \begin{array}{c}
         \Pi_{\mathcal{S}_d}\left( M_d \right)  \\
          \Pi_{\mathcal{S}_{d-1}}\left( M_{d-1} \right)  \\
          \vdots \\
           \Pi_{\mathcal{S}_0}\left( M_0 \right)
    \end{array}\right]
\end{equation*}
and $\eta=\mbox{Re} \left\langle \bDelta, \bold{\Pi} \right\rangle$. In this setting the distance to singularity in Definition \ref{definition:distance_to_sing_unstr} can be seen as a structured distance to singularity, with subspaces $\mathcal{S}_i=\mathbb{C}^{n \times n}$ for $i=0,\ldots,d$ and where the projections $\Pi_{\mathcal{S}_i}$ are the identity on $\mathbb{C}^{n\times n}$. In the case of sparse matrices, for instance, each coefficient matrix $A_i$ can have its own sparsity pattern. For each $\mathcal{S}_i$, the projection $\Pi_{\mathcal{S}_i}$ is the map that preserves the entries in the sparsity pattern and sets to zero the remaining entries.

\subsection{Structure on the whole polynomial}
\label{subsec:Structure_on_whole_pol}

The third example of structure that we present consists of the presence of additional relations among the coefficients of the matrix polynomial. In this class of polynomials, we can find palindromic, anti-palindromic, symmetric, hermitian, even or odd polynomials. Here we propose the specialization of the method for palindromic polynomials:
\begin{equation*}
    P\left(\lambda \right) \; \mbox{such that} \; P \left( \lambda \right)= \lambda^d P^H \left( 1/\lambda \right). 
\end{equation*}

Therefore in order to preserve the structure of the problem, we restrict the perturbations to the subspace
\begin{equation*}
    \mathcal{S}:= \left\lbrace \Delta \bold{A} : \; \Delta A_{d-i}= \Delta A_i^H, \; \mbox{for } i=0,\ldots,d  \right\rbrace,
\end{equation*}
and consider the projection $\Pi_{\mathcal{S}}: \mathbb{C}^{\left( d+1\right)n \times n }  \mapsto \mathcal{S}$ given by
\begin{align*}
    \left[ \begin{array}{c}
     A_d \\
     \vdots \\
     A_0 \\
    \end{array} \right] & \mapsto \left[ \begin{array}{c} \frac{A_d+A_0^H}{2}\\ 
    \vdots \\
    \frac{A_d^H + A_0}{2} 
    \end{array} \right].
\end{align*}
The specialization of Lemma \ref{lem: minimization_struct_pert_general} leads to the resolution of the gradient system
\begin{equation*}
    \dot{\bDelta}= -\bold{\widehat{M}} + \eta \bDelta,
\end{equation*}
where we denote
\begin{equation*}
    \bold{\widehat{M}} = \left[ \begin{array}{c}
         \frac{M_d +M_0^H}{2} \\
         \vdots \\
         \frac{M_d^H+M_0}{2}\\
    \end{array}\right]
\end{equation*}
and $\eta= \mbox{Re} \left\langle \bDelta,\bold{\widehat{M}} \right\rangle$.

\section{Computational approach and numerical examples}
\label{sec:computationa-approach}

The iterative algorithm is a two-level method.

\subsection*{Inner iteration}

We need to integrate numerically the differential equations {\eqref{eq:gradient_system_structured}.

The objective here is not to follow a particular trajectory accurately, but to arrive quickly at a stationary point.
The simplest method is the normalized Euler method, where the result after an Euler step (i.e., a steepest descent step) 
is normalized to unit norm.
This can be combined with an Armijo-type line-search strategy to determine the step size adaptively (see e.g \cite{Luenberger}).

This algorithm requires in each step the computation of $m$ smallest singular eigenvalues and associated singular 
vectors, which might be computed at reduced computational cost for certain structures, like for large sparse matrices $A_i$, 
by using an implicitly restarted Arnoldi method (as implemented in the MATLAB function {\it eigs}).

\subsection*{Outer iteration}

The outer iteration is performed using a Newton-bisection technique on the interval $\left[ \varepsilon_{\rm low}, \varepsilon_{\rm up} \right]$ (see \cite{GugLubMeh} for more details). One possible initialization for the bounds is $0$ for the lower bound and the norm $\| \bold{A} \|_F$ for the upper one. In our numerical experiments, we choose the starting value $\varepsilon_0$ for the distance $\varepsilon$ equal to zero and the perturbation matrix $\bDelta \left( \varepsilon_0 \right)$ is chosen as the free gradient, scaled in order to have unit norm.

For the outer iteration, we prefer to employ Newton's method on the function $g \left( \varepsilon \right) - {\rm tol}_1$. Formally this change guarantees the quadratic convergence of the method. Then we substitute the iteration \eqref{eq:Newton iteration} with
\begin{equation*}
\label{eq:Newton it_for Alg1}
    \varepsilon_{k+1}= \varepsilon_{k} - \frac{ g\left( \varepsilon_k \right) - {\rm tol}_1}{\| g' \left( \varepsilon_k \right) \|_F}.
\end{equation*}

In our numerical experiments, we take the tolerance ${\rm tol}_1$ to be equal to $d \cdot 10^{-6}$ and the maximum number of iterations equal to $20$. The choice of the set of sample points $\mu_j, \; j=1,\ldots,m$ is a delicate feature of the problem. In our numerical experiments, we choose to use exactly $m=dn+1$ points. We stop the method when the width of the interval $\left[ \varepsilon_{\rm{low}}, \varepsilon_{\rm{up}} \right]$ is below a certain threshold ${\rm{tol}}_2$.

\subsection{Numerical examples}
\label{sec:Numerical examples_no_ker}

In order to show the behaviour of our method, we show a few illustrative examples. For simplicity, we test the method on polynomials of low degrees.

\begin{example}
\label{ex:polyn_near_to_sing}

Consider the singular polynomial $P_{1} \left( \lambda \right)$ in \eqref{pol_P1_introduction}. Here we impose as an additional structure on the perturbation matrices the sparsity pattern induced by the initial matrices. A slight perturbation in one of the coefficients can produce a regular matrix polynomial, for instance:
\begin{equation*}
    P_{1}^{\delta} \left( \lambda \right)= \lambda^2 \left[ \begin{array}{c c}
    1  & 0 \\
    0  & 0
 \end{array} \right] + \lambda \left[ \begin{array}{c c}
    0  & 1 \\
   1 - \delta  & 0
 \end{array} \right] + \left[ \begin{array}{c c}
    0  & 0 \\
    0  & 1
 \end{array} \right].
\end{equation*}

Setting different values of $\delta$, we can test the behaviour of the method. In Table \ref{table:ex1_generic}, we collect the numerical results. We set the tolerances as ${\rm tol}_1=5 \cdot 10^{-7}$ and ${\rm tol}_2= 10^{-7}$. 
\begin{table}[!h]
\tbl{Approximation $\varepsilon^*$ of the structured distance to singularity for $P_{1}^{\delta}$.}
{\begin{tabular}{| c| c| c|}
\hline
$\delta$ & $\varepsilon^*$ & iterations \\
\hline \hline
$0.9$ & $5.4614 \cdot 10^{-1}$ & $14$ \\
$0.5$ & $ 2.8033 \cdot 10^{-1}$ & $14$ \\
$10^{-1}$ & $5.0802 \cdot 10^{-2}$ & $11$ \\
$10^{-2}$ & $4.5653 \cdot 10^{-3}$ & $9$ \\
\hline
\end{tabular}}
\label{table:ex1_generic}
\end{table}

\end{example}

\begin{example}
Consider the matrix polynomial
\begin{equation*}
P\left( \lambda \right)=\lambda^2 \left[ \begin{array}{c c c}
0 & 0 & 0  \\
0 & 0 & 1 \\
0 & 1 & 0 \\
\end{array} \right] + \lambda \left[ \begin{array}{c c c}
1 & 0 & 0 \\
0 & 1 & 0 \\
0 & 0 & 1\\
\end{array} \right] + \left[ \begin{array}{c c c}
     0  &  0.4 &  0.89 \\
    0.15 &  -0.02 &  0\\
    0.92 &  0.11 &   0.06\\
\end{array} \right].
\end{equation*}
To illustrate the method in Subsection \ref{subsec:Fixed_coefficients}, we do not perturb the coefficient matrix of $\lambda^2$. Then the computed distance to singularity with fixed $A_2$ is $\varepsilon^*=1.2415$ and the perturbations are the following (truncated to $4$ digits):
\begin{align*}
    \varepsilon^* \Delta_1&= \left[ \begin{array}{c c c}
   -0.9992 & -0.0983 & 0.0636 \\
   0.0732 & -0.2296 &  0.2111 \\
   0.0506 & -0.1282 & 0.1166 \\
    \end{array} \right], \quad 
    \varepsilon^* \Delta_0 = \left[ \begin{array}{c c c}
   -0.0006 &  0.2020 & -0.1940\\
   -0.4901 & -0.0459 & -0.0129 \\
  -0.2674 & 0.0120 &  -0.0397 \\
    \end{array} \right].
\end{align*}
Note that the computation of the distance to singularity without any additional constraints on the coefficients and therefore allowing perturbations for all the matrices gives an approximation of the distance equal to $d_{\mbox{sing}}\approx 1.1054$.
\end{example}

\begin{example}
Consider the matrix polynomial
\begin{equation*}
P\left( \lambda \right)=
    \lambda^2 \left[ \begin{array}{c c c}
      1  & 0 & 0\\
     0 & 1  & 0 \\
     0 & 0 & 1 \\
    \end{array} \right] + 
 \lambda \left[ \begin{array}{c c c}
    -1.79 &  0.10 & -0.60\\
    0.84 & -0.54 & 0.49\\
    -0.89 & 0.30 & 0.74\\
    \end{array} \right] +
    \left[ \begin{array}{c c c}
    0 & 0 & 0 \\
    0 & 0 & 1 \\
    0 & 1 & 0\\
    \end{array} \right].
\end{equation*}
If we do not impose additional constraints and allow complex perturbations, the method produces an approximation of the distance to singularity of $d_{\mbox{sing}} \approx \varepsilon^* =  1.2775967141$ and the complex perturbations of the coefficients, with elements truncated to $4$ digits, are the following:
\begin{align*}
    \varepsilon^*\Delta_2 &= \left[ \begin{array}{c  c c}
  -0.2620 - 0.0062\textbf{i}  &  -0.2781 - 0.1703\textbf{i} &  0.0437 - 0.2691\textbf{i} \\
  -0.2853 + 0.1642\textbf{i} & -0.4380 + 0.0089\textbf{i} &  -0.1363 - 0.3456\textbf{i}\\
   0.0658 + 0.2717\textbf{i} & -0.1408 + 0.3493\textbf{i} & -0.3005 - 0.0139\textbf{i}\\
    \end{array}  \right], \\[2mm]
    \varepsilon^*\Delta_1 &=\left[ \begin{array}{c c c}
    0.1179 - 0.0006\textbf{i}  &-0.0695 + 0.0030\textbf{i} &  0.0380 - 0.0081\textbf{i}\\
   0.0809 - 0.0053\textbf{i}  &-0.0432 + 0.0031\textbf{i} &  0.0217 + 0.0009\textbf{i}\\
  -0.1128 - 0.0107\textbf{i}  & 0.0746 + 0.0014\textbf{i} & -0.0452 + 0.0160\textbf{i}\\
    \end{array}  \right],\\[2mm]
    \varepsilon^*\Delta_0 &=\left[ \begin{array}{c c c}
  0.0183 + 0.0030\textbf{i}  & 0.3563 - 0.0080\textbf{i}  &-0.3250 + 0.0027\textbf{i}\\
   0.0828 + 0.0068\textbf{i} &  0.2267 - 0.0076\textbf{i} & -0.2173 - 0.0032\textbf{i}\\
   0.0946 + 0.0106\textbf{i} & -0.3670 - 0.0079\textbf{i} &  0.3194 - 0.0004\textbf{i}\\
\end{array} \right].
\end{align*}
Here we set the tolerances as ${\rm tol}_1=7 \cdot 10^{-7}$ and ${\rm tol}_2= 10^{-7}$. 

The presence of real coefficients suggests the employment of the extension of the method to structured perturbations, as described in Subsection \ref{subsec:Linear_structure}. Since in this case, we have $\mathcal{S}_i= \mathbb{R}^{3 \times 3}$ for $i=0,1,2$, then the orthogonal projection consists in taking the real part of the matrices. The numerical approximation given by the method is $d_{\mbox{sing}}^{\mathbb{R}}\approx \varepsilon^*=1.2927804886$ and the final real perturbation are given by: 
\begin{align*}
    \varepsilon^*\Delta_2 &= \left[ \begin{array}{c c c}
      -0.1537  & -0.2687 &  -0.1738\\
   -0.2668  & -0.5360  & -0.4148\\
   -0.1718 &  -0.4121  & -0.3823\\
    \end{array}  \right], \\
    \varepsilon^*\Delta_1 &= \left[ \begin{array}{c c c}
   0.1095  & -0.1392 & 0.0120\\
    0.0331  & -0.0891   & 0.0349\\
   -0.1434   & 0.1030 & 0.0358\\
    \end{array}  \right],\\
    \varepsilon^*\Delta_0 &=\left[ \begin{array}{c c c}
    0.0485 &   0.3778  & -0.2605\\
    0.1296  &  0.0999  & -0.1466\\
    0.1305  & -0.5158  &  0.2254\\
\end{array} \right].
\end{align*}
\end{example}

\begin{example}
Consider the palindromic matrix polynomial
\begin{equation*}
 P\left(\lambda \right)= \lambda^2 \left[ \begin{array}{c c c}
     0   &  0  & -1\\
     1   & -1 &   -1\\
     0   & -2  &   0\\
 \end{array} \right] + \lambda \left[ \begin{array}{c c c}
      -1 &  0 & -0.5\\
       0 & 0  & -0.5\\
   -0.5 & -0.5 & 2 \\
 \end{array} \right] + \left[ \begin{array}{c c c}
     0   &  1  &  0\\
     0   & -1 &  -2\\
    -1   & -1  &   0\\
 \end{array} \right].
\end{equation*}
We apply the approach in Subsection \ref{subsec:Structure_on_whole_pol} and obtain that the approximate distance to singularity is equal to $d_{\mbox{sing}}^\mathcal{S} \approx 1.0523$ and the structured singular matrix polynomial is given by the following perturbations:
\begin{align*}
    \varepsilon^* \Delta_2 &= \left[  \begin{array}{c c c}
    0.5242 &  0.1556  &  0.1637\\
   -0.2821 &  -0.0333 &  0.0205\\
    0.1499  & -0.0336 & 0.0325\\
\end{array}   \right] = \varepsilon^* \Delta_0^T, \\
\varepsilon^* \Delta_1 &= \left[ \begin{array}{c c c}
     0.3091 & -0.1264 &  0.2020\\
   -0.1264  & -0.1749 & -0.0279\\
    0.2020  & -0.0279 &  0.0532\\
\end{array} \right] = \varepsilon^* \Delta_1^T.
\end{align*}

\end{example}

\begin{example}
The five points relative pose problem arises in the context of computer vision. It consists of the reconstruction of the position of two cameras starting from the images of five unknown scene points taken from the two viewpoints. As described in \cite{KukBujPaj}, the problem can be rephrased into a cubic polynomial eigenvalue problem in the form
\begin{equation*}
    P\left( \lambda \right)=A_3 \lambda^3 + A_2\lambda^2 + A_1 \lambda +A_0, \in \mathbb{R}^{10 \times 10},
\end{equation*}
where $A_3$ has rank $1$, $A_2$ has rank $3$, $A_1$ has rank $6$ and $A_0$ has full rank. We use the polynomial contained in the MATLAB toolbox \texttt{nlevp} \cite{BetHigTiss}. We consider real perturbations, therefore we impose that $\mathcal{S}_i=\mathbb{R}^{10 \times 10}$. Our method computes an approximation of the distance to the nearest real singular matrix polynomials of $d_{\rm sing} \approx 0.1116$. The method gives the result in $11$ iterations. The final matrices $A_i + \varepsilon ^* \Delta_i$, for $i=0,1,2,3$ do not present common kernel. In the end, the most perturbed matrix is the coefficient $A_0$.
\end{example}

\section{Computing the nearest singular polynomial with common kernel}
\label{sec:distance_for_common_kernel}

As we have mentioned, when the singularity is determined by the presence of a common kernel among the coefficients of the polynomial, it is more appropriate to compute the nearest matrix polynomial whose coefficients have a common kernel. An approach for the detection of the nearest pencil with common null vectors has been proposed in \cite{GugLubMeh} and then specialized for dissipative Hamiltonian pencils in \cite{GugMehr}. Their method consists of the construction of an optimization problem, solved by a constrained gradient system. This minimization problem involves the computation of left and right eigenvectors associated with the smallest eigenvalue of the pencil, and consequently, of their derivatives. In particular, the formula for the derivatives of the eigenvectors involves the computation of the group inverse, as proved in \cite{MeyStew}. In our method, instead, we avoid this step with the introduction of a different minimization functional. In this way, the method is different and significantly simpler than the one proposed in \cite{GugLubMeh} for matrix pencils. Our approach works in the same way both for right kernels and left kernels. For simplicity, we describe the theory only for right kernels, but the same process can be repeated in an analogous way for left ones.

\begin{definition}
Consider a matrix polynomial $P\left( \lambda \right)$ as in \eqref{def:Polynomial_P}. The unstructured distance to the nearest polynomial whose coefficients have a common kernel is defined as:
\begin{equation*}
 d_{\mbox{ker}} \left( P  \right) = \min \left\lbrace \| \Delta P  \| : \; \exists x \in \mathbb{C}^{n}, \, x \neq 0 : \left( A_i  + \Delta A_i \right) x=0  \right\rbrace.
\end{equation*}
\end{definition}

It can be proved that the distance $d_{\rm ker}\left( P \right)$ is equal to the smallest singular value of $\bold{A}$ and therefore the problem allows a direct solution. The structured problem, instead, is more challenging and requires careful analysis. For this motivation, we prefer to show our approach for the case of polynomials with additional structures. In these contexts, we are interested in numerically approximating the structured distance to the nearest common kernel.

\begin{definition}
\label{def:distance_to_common_kernel_Structured}
Given a subset $\mathcal{S} \subseteq \mathbb{C}^{\left( d+1\right) n \times n}$. The structured distance to the nearest polynomial whose coefficients have a common kernel is given by
\begin{equation*}
    d_{\rm ker}^{\mathcal{S}} \left( P \right):=\min \left\lbrace \| \Delta P \|: \; \exists x \in \mathbb{C}^{n}, \; x \neq 0 : \; \left( A_i + \Delta A_i \right)x =0, \; \Delta \bold{A} \in \mathcal{S} \right\rbrace.
\end{equation*}
\end{definition}

 In order to formulate an optimization problem as in Section \ref{sec: section ODE based approach}, we recall the following equivalence:

\begin{lemma}
\label{lem:Equivalence_common_kernel}
Consider $P\left( \lambda \right) \in \mathcal{P}_d$. Then the following are equivalent:
\begin{enumerate}
    \item[(i)] there exists $x \in \mathbb{C}^n$, $x \neq  0$ such that $A_i x =0$, for $i=0,\ldots,d$;
    \item[(ii)] the smallest singular value of $\bold{A}$ is equal to zero.
\end{enumerate}
\end{lemma}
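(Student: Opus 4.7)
The proof is essentially a direct consequence of unfolding what it means for the stacked matrix $\bold{A}$ to have a nontrivial kernel, so the plan is short.

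First I would observe that, by the block structure in \eqref{def:Polynomial_P}, for any vector $x \in \mathbb{C}^n$ the matrix-vector product $\bold{A}x$ is precisely the stacked vector with blocks $A_d x, A_{d-1} x, \ldots, A_0 x$. Hence
\[
\bold{A} x = 0 \iff A_i x = 0 \quad \text{for all } i = 0, \ldots, d,
\]
so condition (i) is equivalent to the existence of a nonzero vector in $\ker(\bold{A})$.

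Next I would invoke the standard characterization of the smallest singular value of a tall matrix: for $\bold{A} \in \mathbb{C}^{(d+1)n \times n}$,
\[
\sigma_{\min}(\bold{A}) = \min_{\|x\|_2 = 1} \|\bold{A}x\|_2,
\]
so $\sigma_{\min}(\bold{A}) = 0$ if and only if there exists a unit vector $x \in \mathbb{C}^n$ with $\bold{A} x = 0$, i.e.\ a nontrivial kernel. Combining this with the first step yields (i) $\Leftrightarrow$ (ii).

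There is no real obstacle: the lemma is essentially a restatement of the definition of $\bold{A}$ combined with the variational characterization of the smallest singular value. The only thing worth being careful about is the convention that $\sigma_{\min}$ refers to the $n$-th singular value of the tall matrix $\bold{A}$ (the smallest of the $n$ singular values), rather than the empty concept one might fear from a rectangular matrix; under this standard convention the argument is immediate.
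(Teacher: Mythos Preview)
Your argument is correct and complete. The paper itself does not give a proof of this lemma at all: it is introduced with ``we recall the following equivalence'' and immediately used, so there is nothing to compare against. Your unfolding of the block structure of $\bold{A}$ together with the variational characterization $\sigma_{\min}(\bold{A}) = \min_{\|x\|_2=1}\|\bold{A}x\|_2$ is exactly the right justification, and your remark about the convention for $\sigma_{\min}$ of a tall matrix is the only point that needed care.
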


\begin{proof}
Let us prove $(i) \Rightarrow (ii)$. From $(i)$, we get that 
\begin{equation*}
\bold{A} x=0,
\end{equation*}
which means that the columns are not linearly independent, and therefore the rank is not full. Then we conclude that $(ii)$ holds true.

Vice versa, from $(ii)$ we have that the matrix
\begin{equation*}
    \left[ A_d^H \; \cdots \; A_0^H \right] \left[ \begin{array}{c}
        A_d  \\
        \vdots \\
        A_0
    \end{array} \right]= A_d^H A_d + \ldots + A_0^H A_0
\end{equation*}
has zero eigenvalue. Consider $x \neq 0$ the right eigenvector associated with $0$, then we have that
\begin{equation*}
    x^H \left( A_d^H A_d + \ldots + A_0^H A_0 \right)x= \| A_d x\|_F^2+ \ldots + \| A_0 x \|_F^2=0,
\end{equation*}
from which we get that $A_ix=0$, for $i=0,\ldots,d$.
\end{proof}
Using Lemma \ref{lem:Equivalence_common_kernel}, we formulate the following optimization problem:
\begin{align}
\label{eq:opt_pbm_d_ker}
    \Delta \bold{A}^* =& \mbox{arg} \min_{\Delta \bold{A} \in \mathcal{S}}\| \Delta \bold{A} \|_F \\
    &{\rm subj. \; to} \; \sigma_{\min} \left(\bold{A}+ \Delta \bold{A} \right)=0, \notag
\end{align}
where $\sigma_{\min}$ denotes the smallest singular value of the matrix. Using this formulation, we have that $d_{\rm ker}^{\mathcal{S}} \left( P \right)=\| \Delta \bold{A}^* \|_F$. 

Fixing the perturbation in the form $\Delta \bold{A}= \varepsilon \bDelta$, with $\| \bDelta \|_F=1$ and $\bDelta \in \mathcal{S}$, we construct an optimization problem equivalent to \eqref{eq:opt_pbm_d_ker} minimizing the functional
\begin{equation}
\label{def:functional_F}
    F_{\varepsilon} \left( \bDelta \right) = \frac{1}{2} \sigma_{\min}^2 \left( \bold{A}+ \varepsilon \bDelta \right),
\end{equation}
where $\sigma_{\min}$ is the smallest singular value of the matrix $\bold{A} + \varepsilon \bDelta$. In order to solve the problem, we apply the steepest descent method and impose the condition \eqref{eq:Condition_conservation_norm} for the conservation of the norm of the perturbation matrices.
The derivative of $F_{\varepsilon} \left( \bDelta \right)$ can be computed using Lemma \ref{lemma:derivative_sing_val} and written through the Frobenius inner product as:
\begin{equation}
\label{eq:derivative_F}
   \frac{1}{\varepsilon \sigma} \frac{d}{dt} F_{\varepsilon} \left( \bDelta \right)= 
   {\rm Re} \left\langle u v^H, \dot{\bDelta} \right\rangle= 
   {\rm{Re}}\left\langle \Pi_{\mathcal{S}}\left( uv^H \right), \dot{\bDelta} \right\rangle,
\end{equation}
where $\sigma = \sigma \left( t \right)$ is the smallest singular value of $\bold{A}+ \varepsilon \bDelta \left( t \right)$ 
and $u, v$ are the associated left and right singular vectors, respectively. Thus we interpret $\Pi_{\mathcal{S}}\left( uv^H \right)$ as the free gradient of 
$F_{\varepsilon}$.

Again we prove now a sequence of results that allow us to optimize
the functional \eqref{def:functional_F} and characterize the stationary 
points of the associated gradient system \eqref{eq:gradient_system_com_ker} as (local) extremizers.

The first result (Lemma \ref{lem:minimiz_common kernel}) provides the gradient of the constrained functional, which is followed by its non-vanishing property (Lemma \ref{lem:nonzero-gradient-S}). 

Theorem \ref{thm: monotonicity of F} states that along solution trajectories of \eqref{eq:gradient_system_com_ker} the functional \eqref{def:functional_F} decreases monotonically in $t$.

Theorem \ref{thm:equiv_charac_common_kern} characterizes stationary points
of the system \eqref{eq:gradient_system_com_ker} as real multiples of the 
free gradient $\Pi_{\mathcal{S}} \left( u v^H \right)$, which provides us
(local) minimizers.

The steepest descent direction for minimizing the functional \eqref{def:functional_F} is given by the following lemma:
\begin{lemma}
\label{lem:minimiz_common kernel}
Consider $\bold{Z},\bDelta \in \mathcal{S}$. A solution of the minimization problem
\begin{align*}
    \bold{Z}^*=& {\rm arg} \min_{\bold{Z} \in \mathcal{S} } {\rm Re} \left\langle \Pi_{\mathcal{S}} \left( u v ^H \right), \bold{Z} \right\rangle \\
    &{\rm subj.\; to} \; {\rm Re} \left\langle \bDelta, \bold{Z} \right\rangle=0, \; {\rm and} \; \| \bDelta \|_F=1 \notag
\end{align*}
is given by
\begin{equation*}
    \nu \bold{Z}^*= - \Pi_{\mathcal{S}} \left( u v^H \right) + \zeta \bDelta,
\end{equation*}
where $\zeta= {\rm Re} \left\langle \Pi_{\mathcal{S}}\left( u v^H \right), \bDelta  \right\rangle$ 
and $\nu$ is the norm of the right-hand side.
\end{lemma}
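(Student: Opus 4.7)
The plan is to mimic the argument of Lemma~\ref{lem: minimization_struct_pert_general} almost verbatim. The objective $\bold{Z}\mapsto \mathrm{Re}\langle \Pi_{\mathcal{S}}(uv^H), \bold{Z}\rangle$ is a real-linear functional on $\mathcal{S}$, and the admissible set is the unit Frobenius sphere of $\mathcal{S}$ intersected with the real hyperplane $\{\bold{Z}\in\mathcal{S} : \mathrm{Re}\langle \bold{\Delta}, \bold{Z}\rangle = 0\}$. Minimizing a linear functional on the unit sphere of a Euclidean (sub)space intersected with a hyperplane is the textbook fact that the minimizer is the negative, unit-normalized, orthogonal projection of the gradient onto that hyperplane; the entire proof amounts to specializing this fact.

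Concretely, I would first note that both $\Pi_{\mathcal{S}}(uv^H)$ and $\bold{\Delta}$ lie in $\mathcal{S}$, so the whole problem takes place inside $\mathcal{S}$ equipped with the real inner product $\mathrm{Re}\langle \cdot, \cdot\rangle$. Since $\|\bold{\Delta}\|_F = 1$, the orthogonal projection of $\Pi_{\mathcal{S}}(uv^H)$ onto the hyperplane $\{\bold{Z}\in\mathcal{S} : \mathrm{Re}\langle\bold{\Delta},\bold{Z}\rangle = 0\}$ is $\Pi_{\mathcal{S}}(uv^H) - \zeta \bold{\Delta}$ with $\zeta = \mathrm{Re}\langle \Pi_{\mathcal{S}}(uv^H),\bold{\Delta}\rangle$; by property~\eqref{eq:prop_orthogonal_projection} this also equals $\mathrm{Re}\langle uv^H,\bold{\Delta}\rangle$, so the two natural forms of $\zeta$ coincide. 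Negating and normalizing then delivers $\nu \bold{Z}^* = -\Pi_{\mathcal{S}}(uv^H) + \zeta \bold{\Delta}$ with $\nu$ the Frobenius norm of the right-hand side. An equivalent route, which I would mention only if needed for completeness, uses Lagrange multipliers: the stationarity condition $\Pi_{\mathcal{S}}(uv^H) + \alpha \bold{\Delta} + \beta \bold{Z} = 0$ in $\mathcal{S}$ forces $\bold{Z}$ into the real span of $\Pi_{\mathcal{S}}(uv^H)$ and $\bold{\Delta}$, after which the two scalar constraints together with the choice of minimum rather than maximum pin down $\alpha$ and $\beta$.

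The implicit non-proportionality hypothesis between $\Pi_{\mathcal{S}}(uv^H)$ and $\bold{\Delta}$ (as stated verbatim for $\bold{M}$ and $\bold{\Delta}$ in Lemma~\ref{lem: minimization_struct_pert_general}) is what guarantees $\nu > 0$, so that the normalization is legitimate and the minimizer is well-defined up to the sign fixed by minimization. The only potential obstacle, such as it is, is purely bookkeeping: one must consistently work with the real part of the Hermitian inner product (the constraints are real-linear even though the ambient space is complex) and invoke~\eqref{eq:prop_orthogonal_projection} to move the projection $\Pi_{\mathcal{S}}$ on or off the rank-one factor $uv^H$ whenever pairing against elements of $\mathcal{S}$. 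No analytic or geometric difficulty arises beyond this.
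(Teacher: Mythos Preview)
Your proposal is correct and follows essentially the same route as the paper: the paper's proof simply states that the argument is analogous to that of Lemma~\ref{lem: minimization}, whose one-line proof observes that the claimed $\bold{Z}^*$ is the (negated, normalized) orthogonal projection of the gradient onto the hyperplane $\{\bold{Z}:\mathrm{Re}\langle\bold{\Delta},\bold{Z}\rangle=0\}$. Your write-up is more detailed but identical in substance; the Lagrange-multiplier alternative you mention is unnecessary but harmless.
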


\begin{proof}
The proof is analogous to the one proposed for Lemma \ref{lem: minimization_struct_pert_general}, using the identification among $\mathbb{C}^{n\left(d+1\right)\times n}$ and $\mathbb{R}^{2n\left(d+1\right)\times 2n}$ and performing the orthogonal projection of the matrix $-\Pi_{\mathcal{S}}(uv^H)$ to the orthogonal complement of the span of $\mathbf{\Delta}$.
\end{proof}

Similarly to Lemma \ref{lem:nonzero-projected-gradient} we have the following result.
\begin{lemma} [Non-vanishing structured gradient]
\label{lem:nonzero-gradient-S}
Let   $\bold{A},\bDelta\in \cS$, a complex/real-linear subspace of $\C^{\left( d+1\right) n \times n}$, $\eps>0$ and $\sigma$  smallest singular value of $\bold{A}+\eps \bDelta$.
Then, 
\[
\Pi_{\mathcal{S}} \left( uv^H \right) \ne 0 \quad\text{ if } \quad \sigma \ne 0.
\]
\end{lemma}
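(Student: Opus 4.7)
The plan is to mimic the argument used in Lemma \ref{lem:nonzero-projected-gradient}: test $\Pi_{\mathcal{S}}(uv^H)$ against an element of $\cS$ for which the inner product can be computed explicitly and shown to equal (up to real part) the singular value $\sigma$. The natural test element is $\bold{A}+\varepsilon\bold{\Delta}\in\cS$ itself, since this is the matrix whose SVD gave rise to $u,v$ in the first place.

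First I would invoke the defining property \eqref{eq:prop_orthogonal_projection} of the orthogonal projection onto $\cS$ to rewrite
\[
\mathrm{Re}\,\bigl\langle \Pi_{\mathcal{S}}(uv^H),\, \bold{A}+\varepsilon\bold{\Delta}\bigr\rangle
= \mathrm{Re}\,\bigl\langle uv^H,\, \bold{A}+\varepsilon\bold{\Delta}\bigr\rangle,
\]
which uses the fact that $\bold{A}+\varepsilon\bold{\Delta}\in\cS$. Then I would unfold the Frobenius inner product using the cyclic property of the trace:
\[
\bigl\langle uv^H,\, \bold{A}+\varepsilon\bold{\Delta}\bigr\rangle
= \mathrm{trace}\bigl(vu^H(\bold{A}+\varepsilon\bold{\Delta})\bigr)
= u^H(\bold{A}+\varepsilon\bold{\Delta})v.
\]
Since $u,v$ are the left/right singular vectors of $\bold{A}+\varepsilon\bold{\Delta}$ associated with the singular value $\sigma$, one has $(\bold{A}+\varepsilon\bold{\Delta})v=\sigma u$ and $\|u\|_2=1$, so this quantity equals $\sigma$ (a nonnegative real number).

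Combining the two displayed equalities, we obtain
\[
\mathrm{Re}\,\bigl\langle \Pi_{\mathcal{S}}(uv^H),\, \bold{A}+\varepsilon\bold{\Delta}\bigr\rangle = \sigma.
\]
Hence, whenever $\sigma\neq 0$, the inner product is nonzero, which forces $\Pi_{\mathcal{S}}(uv^H)\neq 0$, giving the claim by contrapositive. There is no genuine obstacle here: the argument is essentially a one-line computation, and the only subtlety is remembering that the projection identity \eqref{eq:prop_orthogonal_projection} requires the second slot to lie in $\cS$, which is precisely why $\bold{A}+\varepsilon\bold{\Delta}$ (and not an arbitrary matrix) is the right test element.
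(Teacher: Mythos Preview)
Your proof is correct and essentially identical to the paper's own argument: both test $\Pi_{\mathcal S}(uv^H)$ against $\bold{A}+\varepsilon\bold{\Delta}\in\cS$, use the projection identity \eqref{eq:prop_orthogonal_projection} to drop the projection, and then evaluate the inner product as $u^H(\bold{A}+\varepsilon\bold{\Delta})v=\sigma$. Your explicit use of $\mathrm{Re}$ is slightly more careful than the paper's write-up (and is the right thing to do when $\cS$ is only real-linear), but the substance is the same.
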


\begin{proof}
    Again we  consider the inner product 
    \begin{align*}
    &\big\langle \Pi_{\mathcal{S}} \left( uv^H \right), \bold{A}+\eps \bDelta \big\rangle = 
    \big\langle uv^H,\bold{A}+\eps \bDelta \big\rangle = u^H(\bold{A}+\eps \bDelta) v  = \sigma.
    \end{align*}
    Exploiting $\sigma \ne 0$ concludes the proof.
\end{proof}

Using Lemma \ref{lem:minimiz_common kernel}, we have that the following
\begin{equation}
\label{eq:gradient_system_com_ker}
    \dot{\bDelta}= - \Pi_{\mathcal{S}} \left( uv^H \right) +  \zeta \bDelta,
\end{equation}
is a constrained gradient system. Moreover, along each solution of \eqref{eq:gradient_system_com_ker}, the functional $F_{\varepsilon} \left( \bDelta \left( t \right) \right)$ decreases monotonically.

\begin{theorem}
\label{thm: monotonicity of F}
Consider $\bDelta \left( t \right) \in \mathcal{S}$ of unit Frobenius norm and solution of \eqref{eq:gradient_system_com_ker}. Let $\sigma \left( t \right)$ the simple smallest singular value of $\bold{A}+ \varepsilon \bDelta \left( t \right)$ and assume $\sigma(t) \neq 0$. Then:
\begin{equation*}
    \frac{d}{dt} F_{\varepsilon} \left( \bDelta \left( t \right) \right) \leq 0.
\end{equation*}
\end{theorem}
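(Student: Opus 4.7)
The plan is to substitute the gradient system \eqref{eq:gradient_system_com_ker} into the expression \eqref{eq:derivative_F} for the derivative of $F_\varepsilon$ and then use Cauchy--Schwarz, exactly in the spirit of Theorem \ref{thm:G_decreases}. Concretely, I would start from
\[
\frac{1}{\varepsilon \sigma} \frac{d}{dt} F_\varepsilon(\bold{\Delta}) \;=\; \mathrm{Re}\left\langle \Pi_{\mathcal{S}}(u v^H),\, \dot{\bold{\Delta}} \right\rangle,
\]
which is legitimate because $\sigma(t)$ is simple and $\dot{\bold{\Delta}} \in \mathcal{S}$, so the projection can be introduced via the defining property \eqref{eq:prop_orthogonal_projection}.

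Next I would plug in $\dot{\bold{\Delta}} = -\Pi_{\mathcal{S}}(u v^H) + \zeta \bold{\Delta}$ and expand by linearity of the real inner product to obtain
\[
\frac{1}{\varepsilon \sigma} \frac{d}{dt} F_\varepsilon(\bold{\Delta}) \;=\; -\|\Pi_{\mathcal{S}}(u v^H)\|_F^2 + \zeta \,\mathrm{Re}\langle \Pi_{\mathcal{S}}(u v^H), \bold{\Delta}\rangle \;=\; -\|\Pi_{\mathcal{S}}(u v^H)\|_F^2 + \zeta^2,
\]
where the last equality uses the very definition $\zeta = \mathrm{Re}\langle \Pi_{\mathcal{S}}(u v^H), \bold{\Delta}\rangle$.

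Finally, since $\|\bold{\Delta}\|_F = 1$, the Cauchy--Schwarz inequality (applied to the real Frobenius inner product) gives $\zeta^2 \leq \|\Pi_{\mathcal{S}}(u v^H)\|_F^2$, so the bracketed quantity is non-positive. Because $\varepsilon > 0$ and $\sigma \geq 0$ (with equality corresponding to the target set), we conclude $\frac{d}{dt} F_\varepsilon(\bold{\Delta}(t)) \leq 0$. There is really no obstacle here: the monotonicity is an automatic consequence of the gradient-flow structure built into \eqref{eq:gradient_system_com_ker}, and the only delicate point is the simplicity assumption on $\sigma(t)$, which is exactly what ensures Lemma \ref{lemma:derivative_sing_val} applies and makes \eqref{eq:derivative_F} valid along the trajectory.
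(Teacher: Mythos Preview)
Your proposal is correct and follows exactly the approach the paper indicates: substitute the gradient system \eqref{eq:gradient_system_com_ker} into the derivative formula \eqref{eq:derivative_F}, just as in Theorem~\ref{thm:G_decreases}. You have merely made explicit the Cauchy--Schwarz step that the paper leaves implicit in its one-line proof.
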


\begin{proof}
The proof follows using \eqref{eq:derivative_F} and \eqref{eq:gradient_system_com_ker}, similarly to Theorem \ref{thm:G_decreases}, employing the functional $F_{\varepsilon}(\mathbf{\Delta})$ and the matrix $\Pi_{\mathcal{S}}(uv^H)$ in place of $G_{\varepsilon}(\mathbf{\Delta})$ and $\Pi_{\mathcal{S}}(\mathbf{M})$, respectively.
\end{proof}

The stationary points of the matrix ODE \eqref{eq:gradient_system_com_ker} can be characterized as follows.

\begin{theorem}
\label{thm:equiv_charac_common_kern}
Consider the functional \eqref{def:functional_F} and suppose $F_{\varepsilon}\left( \bDelta \right) >0$. Consider $\sigma$ simple non-zero singular value of $\bold{A}+ \varepsilon \bDelta$, with $u,v$ associated left and right singular vectors, respectively. Then the following statements are equivalent:
\begin{enumerate}
    \item[(i)] $\frac{d}{dt} F_{\varepsilon} \left( \bDelta \right)=0$;
    \item[(ii)] $\dot{\bDelta}=0$;
    \item[(iii)] $\bDelta$ is a real multiple of the matrix $\Pi_{\mathcal{S}} \left( u v^H \right)$.
\end{enumerate}
\end{theorem}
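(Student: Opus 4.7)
The plan is to mirror closely the argument used for Theorem \ref{thm:equivalence_charact}, exploiting the explicit form of the gradient system \eqref{eq:gradient_system_com_ker} and the derivative identity \eqref{eq:derivative_F}, together with the non-vanishing structured gradient Lemma \ref{lem:nonzero-gradient-S}. I will prove the cycle $(ii)\Rightarrow(i)\Rightarrow(iii)\Rightarrow(ii)$; the equivalence of (ii) and (iii) is essentially visible by inspection of \eqref{eq:gradient_system_com_ker}, and the only technical point is making (i) force proportionality.

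The implication $(ii)\Rightarrow(i)$ is immediate: substituting $\dot{\bold{\Delta}}=0$ in \eqref{eq:derivative_F} gives $\frac{d}{dt}F_\eps=0$. For $(i)\Rightarrow(iii)$, I substitute the gradient flow \eqref{eq:gradient_system_com_ker} into \eqref{eq:derivative_F} to obtain
\[
\tfrac{1}{\eps\sigma}\tfrac{d}{dt}F_\eps = \mathrm{Re}\bigl\langle \Pi_{\mathcal S}(uv^H),-\Pi_{\mathcal S}(uv^H)+\zeta\bold{\Delta}\bigr\rangle = -\|\Pi_{\mathcal S}(uv^H)\|_F^2 + \zeta^2,
\]
using that $\zeta=\mathrm{Re}\langle \Pi_{\mathcal S}(uv^H),\bold{\Delta}\rangle$ is real. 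The Cauchy--Schwarz inequality applied to the real Frobenius inner product, together with $\|\bold{\Delta}\|_F=1$, yields $\zeta^2\le\|\Pi_{\mathcal S}(uv^H)\|_F^2$, with equality if and only if $\bold{\Delta}$ and $\Pi_{\mathcal S}(uv^H)$ are real-proportional. Thus $(i)$ forces equality in Cauchy--Schwarz, giving $\bold{\Delta}=c\,\Pi_{\mathcal S}(uv^H)$ for some real $c$; because $\sigma>0$ is a positive real scalar, this is the same as saying $\bold{\Delta}$ is a real multiple of $\Pi_{\mathcal S}(\sigma uv^H)$, which is $(iii)$. Here the assumption $F_\eps(\bold{\Delta})>0$ guarantees $\sigma\neq 0$, so Lemma \ref{lem:nonzero-gradient-S} ensures $\Pi_{\mathcal S}(uv^H)\neq 0$ and the proportionality is genuine rather than vacuous.

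For $(iii)\Rightarrow(ii)$, if $\bold{\Delta}=c\,\Pi_{\mathcal S}(\sigma uv^H)=c\sigma\,\Pi_{\mathcal S}(uv^H)$ with $c\in\R$, the unit-norm constraint forces $|c\sigma|\,\|\Pi_{\mathcal S}(uv^H)\|_F=1$, and a direct computation gives
\[
\zeta = \mathrm{Re}\langle \Pi_{\mathcal S}(uv^H),\bold{\Delta}\rangle = c\sigma\,\|\Pi_{\mathcal S}(uv^H)\|_F^2,
\]
so that $\zeta\bold{\Delta}=c\sigma\,\|\Pi_{\mathcal S}(uv^H)\|_F^2\cdot c\sigma\,\Pi_{\mathcal S}(uv^H)=\Pi_{\mathcal S}(uv^H)$, yielding $\dot{\bold{\Delta}}=0$ from \eqref{eq:gradient_system_com_ker}.

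The only mild obstacle I anticipate is bookkeeping around the factor $\sigma$ in $(iii)$ and the normalization $\|\bold{\Delta}\|_F=1$: since $\sigma>0$ and $\Pi_{\mathcal S}(uv^H)\neq 0$ by Lemma \ref{lem:nonzero-gradient-S}, rescaling is harmless and the "real multiple" formulation in $(iii)$ is consistent with both proportionalities appearing in the algebra. No deeper difficulty is expected, since the argument is really just the steepest-descent characterization of stationarity on the unit sphere of a constrained gradient flow.
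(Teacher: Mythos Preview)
Your proof is correct and follows essentially the same approach as the paper, which simply states that the equivalence follows by setting the right-hand side of \eqref{eq:gradient_system_com_ker} to zero. Your write-up is considerably more detailed than the paper's one-line proof: in particular, your Cauchy--Schwarz computation $-\|\Pi_{\mathcal S}(uv^H)\|_F^2+\zeta^2$ is exactly $-\|\dot{\bold{\Delta}}\|_F^2$ once one expands the norm of the right-hand side of \eqref{eq:gradient_system_com_ker} using the norm-preservation identity $\mathrm{Re}\langle\bold{\Delta},\dot{\bold{\Delta}}\rangle=0$, so the two arguments are algebraically identical.
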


\begin{proof}
    The equivalence can be proved in the same as in Theorem \ref{thm:equivalence_charact}, exploiting the expression for the derivative in \eqref{eq:derivative_F} and the gradient system in \eqref{eq:gradient_system_com_ker}.
\end{proof}

Using the results in Theorem \ref{thm:equiv_charac_common_kern}, we get that the perturbation matrix we are looking for is a rank one matrix in the form
$
    \bDelta= - {\Pi_{\mathcal{S}} \left( uv^H\right)}/{\|\Pi_{\mathcal{S}} \left( u v^H \right) \|_F}.
$
From Theorem \ref{thm:equiv_charac_common_kern}, we observe that the stationary points of \eqref{eq:gradient_system_com_ker} are real multiples of a projection of rank one matrices. This piece of information may be exploited in the algorithm in order to improve its efficiency. Several attempts in this direction have been recently proposed by Guglielmi et al. in \cite{GugLubSic}, with a focus on structures induced by sparsity patterns. The outer iteration, instead, can be developed in the same way as Subsection \ref{subsection: outer iteration_struct}, using a hybrid Newton-bisection algorithm on the functional
\begin{equation*}
    f\left( \varepsilon \right):= F_{\varepsilon} \left( \bDelta\left( \varepsilon \right)  \right),
\end{equation*}
where $\bDelta\left( \varepsilon \right)$ is the stationary point of unit Frobenius norm given by the integration of \eqref{eq:gradient_system_com_ker}.
A relevant setting of application of the method for structured perturbations may be the one of DAE systems with dissipative Hamiltonian structures. In this context, indeed, the pencil associated with the system is in the form
\begin{equation*}
    L \left( \lambda \right):=\lambda A_2 - \left( A_1-A_0 \right),
\end{equation*}
with $A_i \in \mathbb{R}^{n \times n}$ for $i=0,1,2$, $A_1=-A_1^T$, and $A_2,A_0$ symmetric and positive definite. Since the coefficient matrices are constrained to have this dissipative Hamiltonian structure, then it is convenient to analyze the distance to the nearest structured triplet with common coefficients, as proposed in \cite{GugMehr}. In this case, the singularity occurs when the matrices $A_2,A_1,A_0$ have a common kernel, then we can apply our method relying only on the coefficient matrices. The subspace in which we search for perturbations is
\begin{equation*}
 \mathcal{S}:= \left\lbrace \Delta \bold{A}: \; \Delta A_i = \Delta A_i^T,\; \mbox{for} \; i=0,2, \; \mbox{and} \; \Delta A_1=-\Delta A_1^T  \right\rbrace.
\end{equation*}

Then, starting from the gradient system \eqref{eq:gradient_system_com_ker}, we specialize the method with the computation of the stationary points of
\begin{align*}
    \dot{\Delta}_i &= -\mbox{Sym} \left( uv^H \right) + \zeta \Delta_i, \quad i=0,2, \\ 
     \dot{\Delta}_1 &= -\mbox{SkewSym} \left( uv^H \right) + \zeta \Delta_1 \notag,
\end{align*} 
where Sym and SkewSym denote the symmetric and skew-symmetric parts of a matrix, respectively.

\subsection{Numerical examples}
\label{sec:numericalexamples}

In this Subsection we provide some illustrative examples.

\begin{example}
\label{ex: example_comm_kern_GuglMehr}
Consider the quadratic matrix polynomial $P \left( \lambda \right)=\lambda^2 A_2 + \lambda A_1 + A_0 \in \mathbb{R}^{5 \times 5}$, taken from \cite{GugMehr}, where the coefficients are
\begin{align*}
    A_2  &=\left[ \begin{array}{c c c c c}
    0.15 & 0.02 & -0.04 & 0.02 & -0.04\\
     0.02 & 0.22 & 0  & -0.01 & -0.03\\
     -0.04 & 0 & 0.11 &  -0.07 & -0.04\\ 
    0.02 & -0.01 & -0.07 & 0.01 & 0.10\\
     -0.04 & -0.03 & -0.04 & 0.10 & 0.39 \\
    \end{array} \right] \\
    A_1 &= \left[ \begin{array}{c c c c c}
    0 & -0.27 & -0.03 & -0.01 & 0.21\\
     0.27 &  0 & -0.15 & 0.03 & 0.11\\
     0.03 & 0.15 &  0  & 0.07 & -0.07\\ 
     0.01 & -0.03 & -0.07 & 0 & 0.05\\
     -0.21 & -0.11 & 0.07 & -0.05 & 0\\
    \end{array} \right] \\
    A_0 &= \left[ \begin{array}{c c c c c}
     0.49 & -0.13 & 0.05 & -0.15 & 0.11\\
     -0.13 &  0.23  & -0.05 & -0.10 & -0.19\\
     0.05 & -0.05 & 0.48 & -0.06 & 0.02\\ 
     -0.15 & -0.10 & -0.06 & 0.55 & 0.16\\
     0.11 & -0.19 & 0.02 & 0.16 & 0.48\\
    \end{array}
    \right].
\end{align*}
Note that the coefficients $A_2,A_0$ are symmetric matrices and $A_1$ is skew-symmetric. The method proposed in Section \ref{sec:distance_for_common_kernel} produces an approximation of the structured distance $d_{\rm ker}^{\mathcal{S}} \approx \varepsilon^*=0.3541658817$ with perturbation matrices 
\begin{align*}
    \varepsilon^* \Delta_2 &= \left[ 
    \begin{array}{c c c c c}
    -0.0105  &  0.0305  & 0.0033  &  0.0057  &  0.0421\\
    0.0305  & -0.0806 &  -0.0070  & -0.0340  & -0.1280\\
    0.0033 &  -0.0070 &  -0.0001 &  -0.0077 &  -0.0153\\
    0.0057 &  -0.0340 &  -0.0077 & 0.0362 &  -0.0095\\
    0.0421  & -0.1280  & -0.0153  & -0.0095  & -0.1643\\
    \end{array}
    \right], \\
    \varepsilon^* \Delta_1 &= \left[ 
    \begin{array}{c c c c c}
     0   & 0.0423 &  -0.0004 &  -0.0153 &   0.0399\\
   -0.0423   &  0 &   0.0271  & -0.0013 &  -0.0081\\
    0.0004 &  -0.0271  &   0  &  0.0098 &  -0.0255\\
    0.0153  &  0.0013  & -0.0098  &  0  &  0.0041\\
   -0.0399  &  0.0081  &  0.0255  & -0.0041  &  0\\
    \end{array}
    \right], \\
    \varepsilon^* \Delta_0 &= \left[ 
    \begin{array}{c c c c c}
   -0.0198 &   0.0425  &  0.0113 &  -0.0268  &  0.0434\\
    0.0425 &  -0.0387 &  -0.0231  &  0.0561  & -0.0554\\
    0.0113  & -0.0231  & -0.0064  &  0.0153  & -0.0240\\
   -0.0268  &  0.0561  &  0.0153  & -0.0362  &  0.0577\\
    0.0434  & -0.0554  & -0.0240  &  0.0577 & -0.0680\\
    \end{array}
    \right].
\end{align*}
The method proposed in \cite{GugMehr} produces a numerical approximation for $d_{\rm ker}^{\mathcal{S}}$ equal to $0.3568$ (rounded to four digits) and different perturbation matrices.
\end{example}

\begin{example}
The singularity associated with a common kernel among the coefficients may also occur in quadratic polynomials arising in linear gyroscopic systems in the form:
\begin{equation*}
    M \ddot{x} \left( t \right) + G \dot{x} \left( t \right) + K x \left( t \right)=0, \quad M,G,K \in \mathbb{R}^{n \times n},
\end{equation*}
where the mass matrix $M$ is symmetric and positive definite, the gyroscopic matrix $G$ is skew-symmetric and $K$, which is related to potential forces, is symmetric. The structure of the coefficient matrices derives directly from the properties of gyroscopic systems. Then the coefficients belong to the set
\begin{equation*}
    \mathcal{S}:=\left\lbrace \bold{A} \in \mathbb{R}^{3n \times n} : \; A_2, A_0 \; {\rm symmetric}, \; A_1 \; {\rm skew \; symmetric} \right\rbrace .
\end{equation*}
We compute the approximation of the distance $d_{\rm ker}^{\mathcal{S}}$ for right common kernel, using the polynomial $\lambda^2 M + \lambda G + K \in \mathbb{R}^{3 \times 3}$ in \cite{GuglManet}, with coefficients $M=I$
\begin{equation*}
    G=\left[ \begin{array}{c c c}
    0 & -2 & 4\\
    2 & 0 & -2\\
    -4 & 2 & 0  \end{array} 
    \right], \quad K= \left[  \begin{array}{c c c}
         13 & 2 &1\\
         2 & 7 &2\\
        1 & 2 &4 \end{array} \right].
\end{equation*}
The approximation of $d_{\rm ker}$ is given by $\varepsilon^*=14.4976$. This result shows that the polynomial is particularly far from the singularity, when the perturbation matrices are constrained to the subspace $\mathcal{S}$.
The obtained extremal perturbations are:
\begin{align*}
    \varepsilon^*\Delta_2 &= \left[ \begin{array}{c c c}
    -0.0018 &  0.0051  &  0.0005\\
    0.0051  & -0.0260 &  -0.0168\\
    0.0005  & -0.0168 &  -0.9997\\
    \end{array} \right], \\
    \varepsilon^*\Delta_1 &= \left[ \begin{array}{c c c}
     0  & 8.2412  & -4.1079\\
   -8.2412 &  0  &  1.9966\\
    4.1079  & -1.9966 & 0\\
    \end{array} \right], \\
    \varepsilon^*\Delta_0 &= \left[ \begin{array}{c c c}
   0.0360 & 0.4187 & -1.0347\\
    0.4187 &   1.9680 & -2.1538\\
   -1.0347  & -2.1538  & -3.9974\\
    \end{array} \right].
\end{align*}

\end{example}

\section{A posteriori bounds to singularity} 
\label{sec:a posteriori}

From a numerical point of view, it is not possible to solve $G_\eps(\bDelta) = 0$ exactly, but only to approximate 
it with a tiny error. This means that the computed polynomial is \emph{numerically singular} but not exactly.
An important issue is certainly how to relate this inaccuracy with singularity of the perturbed polynomial. 
Following the methodology we propose, we say that a computed polynomial $P+\Delta P$ such that the associated functional $G_\eps(\bDelta) < {\rm{tol}}$ (with a tiny tolerance ${\rm{tol}}$) is  \emph{numerically singular}.
Does this mean that there exists a nearby singular polynomial, that is that the distance to singularity of what we have computed is indeed tiny? The question we formulate here is then the following:

\begin{itemize}
    \item Given a numerically singular polynomial (possibly preserving the structure of the original polynomial),
    what is an upper bound to its distance to singularity?
\end{itemize}

The program of this section is the subsequent. Given a polynomial computed by our methodology, which is numerically singular, that is such that the considered functional $G_\eps$ is made tiny enough, i.e. below a given threshold, we propose to make use of some available
upper bounds for its (unstructured) distance to singularity, that is to the closest singular polynomial. 
These bounds can be computed in a reliable way and provide us an a-posteriori measure that is able to give an answer to the question we have set. 
If the bound turns out to be tiny then we consider the obtained result reliable. A few examples will illustrate this analysis.

A posteriori bounds on the accuracy of our approximation can be found comparing the outcome with upper bounds for the distance to singularity coming from purely algebraic approaches. One of these has been developed in \cite{ByHeMeh} for matrix pencils. Consider the perturbations $\bold{\Delta}$ arising in our approach. We associate to our perturbed matrix polynomial the linearization obtained with the companion formulation and apply the QZ algorithm of the pencil. It has been proved by Van Dooren in \cite{VanDoDew} that this approach is backward stable, using the definition of backward error
\begin{equation*}
    \eta=\min\left\lbrace \varepsilon: \left( P(\widetilde{\lambda}) + \Delta P(\widetilde{\lambda}) \right)\widetilde{x}=0, \; \|\bold{\Delta A} \|_F \leq \varepsilon \|\bold{A} \|_F \right\rbrace.
\end{equation*}
Therefore, if the upper bound obtained using the QZ is in the order of the considered tolerance, then our approach produces a polynomial close to a singular one. 

For a more precise result, it is possible to employ Theorem $2.1$ in \cite{Dmytryshyn} (or Theorem $5.21$ \cite{Dopico}). Indeed, given a matrix polynomial $P(\lambda)$ and its first companion form $\mathcal{C}_{P(\lambda)}$, it can be proved that, given a sufficiently small in norm pencil $\mathcal{E}$ as a perturbation, there exists a matrix polynomial $\Delta P(\lambda)$ for which $\mathcal{C}_{P(\lambda)} + \mathcal{E}$ is strictly equivalent to the linearization of $\mathcal{C}_{P(\lambda)+\Delta P(\lambda)}$ and moreover we have the following bound on the distance:
\[
\| \mathcal{C}_{P(\lambda)+\Delta P(\lambda)} - \mathcal{C}_{P(\lambda)} \| \leq 4d(1 + \| P(\lambda)\|_F) \|\mathcal{E}\|.
\]

In the very recent paper \cite{DasBora}, the authors propose a characterization of the nearest singular matrix polynomial to a regular one $P(\lambda)=\sum_{i=0}^d A_i \lambda^i$, using convolution matrices 
\[
C_{r}\left( P \right) = \underbrace{\begin{bmatrix}
    A_0 &  & & \\
    A_1 & A_0 & & \\
    \vdots & A_1  & \ddots & \\
    A_d & \vdots  & \ddots & A_0 \\
     & A_d & & A_1 \\
     & & \ddots & \vdots \\
     &  & & A_d\\
\end{bmatrix}}_{r+1 \; \mbox{block columns}}
\]
for $r=0,1,\ldots$. In particular, the characterization of the unstructured distance to singularity given in Theorem $6.3$ (pag. $319$ in \cite{DasBora}) may be useful to provide a few upper bounds for this distance. Indeed the authors derive the following upper bound for the distance to singularity in the Frobenius norm for a matrix polynomial $P$ of size $n \times n$ and degree $k$:
\begin{equation}
\label{eq:upper_bound_DasBora}
    d_{\mbox{sing}}(P) \leq \min \left\lbrace \min_{0 \leq j \leq k(n-1)}\frac{\sigma_{\min} \left( C_j(P) \right)}{\sigma_{\min}\left( V_j\right)} ,  \min_{0 \leq j \leq k(n-1)}\frac{\sigma_{\min} \left( C_j(P^T) \right)}{\sigma_{\min} ( \widetilde V_j )} \right\rbrace,
\end{equation}
where $V_j$ and $\widetilde V_j$ are the matrices
\begin{align*}
    V_j &= \left. \begin{bmatrix}
        v_0 & v_1 & \cdots & v_{j-1}&  v_j & & & \\
         & v_0 & v_1 & \cdots & v_{j-1} & v_j & & \\ 
          &  & \ddots & \ddots & \ddots & \ddots & \ddots & \\ 
         &  &  & v_0 & v_1 & \cdots & v_{j-1} & v_j\\ 
    \end{bmatrix} \right\rbrace  (k+1)n  \; \text{rows}, \\
    \widetilde V_j &= \left. \begin{bmatrix}
        \tilde v_0 & \tilde v_1 & \cdots & \tilde v_{j-1}& \tilde v_j & & & \\
         & \tilde v_0 & \tilde v_1 & \cdots & \tilde v_{j-1} & \tilde v_j & & \\ 
          &  & \ddots & \ddots & \ddots & \ddots & \ddots & \\ 
         &  &  & \tilde v_0 & \tilde v_1 & \cdots & \tilde v_{j-1} & \tilde v_j\\ 
    \end{bmatrix}  \right\rbrace  (k+1)n  \; \text{rows},
\end{align*}
defined using the right singular vector $\left[ v_0^T \; \cdots \; v_j^T \right]$ of $C_j(P)$ associated with the smallest singular value $\sigma_{\min}\left( C_j(P)\right)$ and the right singular vector $\left[ \tilde v_0^T \; \cdots \; \tilde v_j^T  \right]$ of $C_j(P^T)$ associated with the smallest singular value $\sigma_{\min}\left( C_j(P^T)\right)$. It is important to observe that the upper bound appears to be tight a many cases (see Section $9$ in \cite{DasBora}).

We observe that the upper bound in \eqref{eq:upper_bound_DasBora} provides a limitation for the unstructured distance to singularity since the nearby singular polynomial does not necessarily preserve the structure of the
computed polynomial. Nevertheless, it can be a useful tool in order to verify a-posteriori that the perturbed matrix polynomial $P(\lambda)+ \Delta P(\lambda)$ computed by our approach is indeed close to being singular.

\subsubsection*{Application to Example \rm \ref{ex:polyn_near_to_sing}}

Consider again Example \ref{ex:polyn_near_to_sing} with $\delta=10^{-2}$. Running our method with the additional constraint of the sparsity pattern induced by the coefficients, we obtain the following perturbed polynomial (here reported with $4$ digits precision):
\[
P_{1}^{\delta}(\lambda) + \varepsilon^*\Delta(\lambda) = \lambda^2 \begin{small}\begin{bmatrix}
    9.9749 \times 10^{-1} & 0 \\
     0 & 0\\
\end{bmatrix}\end{small} + \lambda \begin{small} \begin{bmatrix}
   0 & 1.0025 \\
    9.9252 \times 10^{-1} & 0\\
\end{bmatrix}\end{small} + \begin{small}\begin{bmatrix}
   0 & 0 \\
    0 & 9.9749 \times 10^{-1}\\
\end{bmatrix}\end{small}.
\]
In order to check that the perturbed polynomial $P_1^{\delta}(\lambda) + \Delta(\lambda)$ is indeed close to singular, we compute the upper bound \eqref{eq:upper_bound_DasBora} for the polynomial $P_1^{\delta}(\lambda) + \Delta(\lambda)$ and in this case this is equal to $1.4142 \times 10^{-6}$. This is sufficient to assure that the perturbed polynomial is at maximum $1.4142 \times 10^{-6}$ away from a singular one. Note that usually the tolerance required in the inner iteration of our method is in the order of $10^{-6}$.
\medskip

If the numerical singularity of the polynomial, which we detect by applying our algorithm, is associated with the presence of a common kernel among the coefficient matrices, we have that the unstructured distance to singularity has a direct solution, taking the smallest singular value of the matrix $\bold{A}$. Then in these contexts we may have an upper bound on the unstructured distance to singularity of $P(\lambda)$ given by
\begin{equation}
\label{eq:upper_bound_kernel}
    d_{\rm sing}(P) \leq \min \left\lbrace \sigma_{\min} ( \left[A_0^T, \ldots, A_d^T\right]^T ) ,\sigma_{\min}\left( \left[A_0, \ldots, A_d \right] \right) \right\rbrace ,
\end{equation}
as proposed by \cite{ByHeMeh} for pencils and \cite{DasBora} for matrix polynomials. Therefore, for this class of problems, it is reasonable to test if the matrix polynomial obtained with our approach is singular using both \eqref{eq:upper_bound_DasBora} and \eqref{eq:upper_bound_kernel}. 

\subsubsection*{Application to Example \rm \ref{ex: example_comm_kern_GuglMehr}} 
Consider again Example \ref{ex: example_comm_kern_GuglMehr}.
We may check if the final structured polynomial with common kernel ${P}(\lambda) + \varepsilon \Delta (\lambda)$ is close to a singular one. The upper bound in \eqref{eq:upper_bound_DasBora} applied to the perturbed matrix polynomial is equal to $3.6356 \times 10^{-4}$, while the upper bound given by \eqref{eq:upper_bound_kernel} is $4.7016 \times 10^{-4}$. This shows that ${P}(\lambda) + \varepsilon \Delta (\lambda)$ is at maximum $3.6356 \times 10^{-4}$ away from a singular polynomial.

\section{Conclusions and further work} \label{sec:concl}
We have proposed an approach for the numerical approximation of the distance to singularity for regular matrix polynomials. We have considered two different cases, depending on the possible presence of a common left/right kernel among the coefficient matrices, and developed two related numerical methods, both based on the computation of the gradient flow of a suitable functional. Remarkably, our approach allows us to compute structured distances, where the coefficient matrices belong to a linear subspace. The structures we consider may involve not only the coefficient matrices individually, but also the whole set, as in the palindromic case. Further studies are needed in order to speed up the algorithm, with particular focus on the computational speed of the ODE integrator, which constitutes the most expensive part of the whole algorithm. 

\section*{Acknowledgements}

We wish to thank two anonymous Referees for their helpful suggestions and constructive remarks. In particular we thank an anonymous Referee for suggesting to add Section \ref{sec:a posteriori} to the original version of the article.

Miryam Gnazzo and Nicola Guglielmi thank the INdAM GNCS (Gruppo Nazionale di Calcolo Scientifico).

\section*{Disclosure statement}
No potential conflict of interest was reported by the authors.

\section*{Funding}

Nicola Guglielmi acknowledges that his research was supported by funds from the Italian 
MUR (Ministero dell'Universit\`a e della Ricerca) within the PRIN 2017 Project 
`Discontinuous dynamical systems: theory, numerics and applications'' 
and the   Pro3 joint project entitled
``Calcolo scientifico per le scienze naturali, sociali e applicazioni: sviluppo metodologico e tecnologico''.

\bibliographystyle{tfnlm}
\bibliography{biblio_revision}

\end{document}